\def\R{\mathbb{R}}
\def\N{\mathbb{N}}
\def\Z{\mathbb{Z}}
\def\C{\mathbb{C}}
\def\supp{{\rm supp}}
\renewcommand{\d}{\text{\rm d}}
\newcommand{\mc}{\mathcal}
\newtheorem{theorem}{Theorem}
\newtheorem{conjecture}[theorem]{Conjecture}
\newtheorem{corollary}[theorem]{Corollary}
\newtheorem*{definition*}{Definition}
\newtheorem{proposition}[theorem]{Proposition}
\newtheorem{lemma}[theorem]{Lemma}
\DeclareFontFamily{U}{tipa}{}
\DeclareFontShape{U}{tipa}{m}{n}{<->tipa10}{}
\newcommand{\arc@char}{{\usefont{U}{tipa}{m}{n}\symbol{62}}}%
\numberwithin{equation}{section}
\newcommand{\intav}[1]{\mathchoice {\mathop{\vrule width 6pt height 3 pt depth  -2.5pt
\kern -8pt \intop}\nolimits_{\kern -6pt#1}} {\mathop{\vrule width
5pt height 3  pt depth -2.6pt \kern -6pt \intop}\nolimits_{#1}}
{\mathop{\vrule width 5pt height 3 pt depth -2.6pt \kern -6pt
\intop}\nolimits_{#1}} {\mathop{\vrule width 5pt height 3 pt depth
-2.6pt \kern -6pt \intop}\nolimits_{#1}}}
\newcommand{\intavl}[1]{\mathchoice {\mathop{\vrule width 6pt height 3 pt depth  -2.5pt
\kern -8pt \intop}\limits_{\kern -6pt#1}} {\mathop{\vrule width 5pt
height 3  pt depth -2.6pt \kern -6pt \intop}\nolimits_{#1}}
{\mathop{\vrule width 5pt height 3 pt depth -2.6pt \kern -6pt
\intop}\nolimits_{#1}} {\mathop{\vrule width 5pt height 3 pt depth
-2.6pt \kern -6pt \intop}\nolimits_{#1}}}
\title[Fourier optimization]{Fourier optimization, the least quadratic non-residue, and the least prime in an arithmetic progression}
\author[Carneiro]{Emanuel Carneiro}
\author[Milinovich]{Micah B. Milinovich}
\author[Quesada-Herrera]{Emily Quesada-Herrera}
\author[Ramos]{Antonio Pedro Ramos}
\address{
ICTP - The Abdus Salam International Centre for Theoretical Physics, 
Strada Costiera, 11, I - 34151, Trieste, Italy.}
\email{carneiro@ictp.it}
\address{Department of Mathematics, University of Mississippi, University, MS 38677 USA.}
\email{mbmilino@olemiss.edu}
\address{University of Lethbridge, Department of Mathematics and Computer Science, 4401 University Dr W, Lethbridge, AB T1K 3M4, Canada}
\email{emily.quesadaherrera@uleth.ca}
\address{IMPA - Instituto de Matem\'atica Pura e Aplicada, Rio de Janeiro - RJ, Brazil, 22460-320.}
\email{antonio.ramos@impa.br}
\date{\today}
\begin{document}

\subjclass[2020]{11M06, 11M26, 11N13, 42A38, 65K05}
\keywords{Fourier optimization; Dirichlet characters; least character non-residue; least prime in an arithmetic progression; explicit formula}
\begin{abstract} By means of a Fourier optimization framework, we improve the current asymptotic bounds under GRH for two classical problems in number theory: the problem of estimating the least quadratic non-residue modulo a prime, and the problem of estimating the least prime in an arithmetic progression.

\end{abstract}

\maketitle 

\section{Introduction} In this paper, we apply a Fourier optimization framework to study two classical problems in number theory: (i) the problem of estimating the least quadratic non-residue modulo a prime, and (ii) the problem of estimating the least prime in an arithmetic progression. We are interested in establishing the strongest possible asymptotic bounds under the assumption of the Generalized Riemann Hypothesis (GRH) for Dirichlet $L$-functions. Our results are inspired by the previous work of Lamzouri, Li, and Soundararajan \cite{LLS}, who studied the same problems, and of Carneiro, Milinovich, and Soundararajan \cite{CMS}, who developed a Fourier optimization framework to study the maximum gap between primes assuming the Riemann Hypothesis. We provide improvements over the results in \cite{LLS}, both conceptually and numerically. One of the key elements of our approach is a thorough study of the extremal problems in Fourier analysis associated to these number theory problems. Throughout the paper we adopt the following normalization for the Fourier transform of a function $F \in L^1(\R)$:
\[
\widehat{F}(t) := \int_{-\infty}^{\infty} e^{-2 \pi i x t }\,F(x)\,\d x.
\]

\subsection{The least quadratic non-residue} \label{LQNR_Sec_11} Let $p$ be an odd prime, and let $n_p$ denote the least quadratic non-residue modulo $p$. As a consequence of the P\'olya-Vinogradov inequality, I.~M.~Vinogradov developed a clever trick  that established the bound $n_p \ll p^{\frac{1}{2\sqrt{e}}}\log^2 p$. Burgess later combined Vinogradov's trick with his bounds for character sums to show that $n_p = O_\varepsilon\big(p^{\frac{1}{4\sqrt{e}}+\varepsilon}\big)$ for all $\varepsilon>0$. Vinogradov further conjectured that $n_p \ll_\varepsilon p^\varepsilon$ for all $\varepsilon>0$, and Vinogradov's conjecture is now known to follow from the generalized Lindel\"of hypothesis and therefore from GRH. 

\smallskip

Under GRH, Ankeny \cite{An} proved that $n_{p}  \ll \log^2p$. The order of magnitude of this estimate has never been improved. Bach \cite{Bach} later established the asymptotic estimate $n_p \leq (1 + o(1))\log^2p$ which was improved by Lamzouri, Li, and Soundararajan \cite{LLS, LLS2} to $n_{p} \leq (0.794 + o(1))\log^2p$. 

\smallskip

We consider here the following constant arising from a Fourier optimization problem:
\begin{align}\label{20240403_11:21}
 \mc{C}(1) := \sup_{0 \neq F \in \mc{A}} \frac{2\pi}{\|F\|_1}\left( \int_{-\infty}^{0} \widehat{F}(t)\,e^{\pi t} \,\d t - \int_{0}^{\infty} \big|\widehat{F}(t)\big|\,e^{\pi t} \,\d t \right),
\end{align}
where the supremum is taken over the class of functions $\mc{A} = \{F:\R \to \C\,; \ F \in L^1(\R) \,;\ \widehat{F} \ {\rm is\ real \mbox{-}valued}\}$. We improve the current best conditional bounds for the least quadratic non-residue by establishing the following connection.

\begin{theorem}\label{Thm1_LQNR}
Assume GRH. Let $n_p$ be the least quadratic non-residue modulo a prime $p$. Then 
\begin{align*}
\limsup_{p\to \infty} \frac{n_{p}}{\log^2 p} \leq  \mc{C}(1)^{-2} < 0.7615 \, .
\end{align*} 
\end{theorem}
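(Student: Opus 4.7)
My plan is to combine the Guinand--Weil explicit formula for $L(s,\chi_p)$, where $\chi_p=\left(\frac{\cdot}{p}\right)$ is the Legendre symbol modulo $p$ (a real primitive Dirichlet character), with the key observation that $n_p>y$ forces $\chi_p(n)=1$ for every integer $n\leq y$. Writing $y=c\log^2 p$, the goal is to show that whenever $n_p>y$ one must have $c\leq\mathcal{C}(1)^{-2}+o(1)$ as $p\to\infty$; this yields the asserted bound on $\limsup_{p\to\infty}n_p/\log^2 p$.

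First I would fix $F\in\mathcal{A}$, restricted initially to a dense subclass of smooth, rapidly decaying functions whose Fourier transform $\widehat F$ also has rapid decay, and construct a Guinand--Weil test pair $(g,h)$ with $h=\widehat g$ from $F$ by rescaling so that the spatial width of $g$ matches $\log y$ while $h$ is essentially $\widehat F$ compressed to the mean spacing $\sim 2\pi/\log p$ of low-lying zeros of $L(s,\chi_p)$. A convenient normalization is $g(u)=F(u/(2\pi))$ multiplied by a smooth cutoff near $|u|=\log y$, so that $g(\log n)$ vanishes for $n>y$ up to a negligible boundary layer.

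Applying the explicit formula gives, schematically,
$$\sum_\gamma h(\gamma)=h(0)\,\frac{\log p}{2\pi}+(\text{archimedean terms})-2\sum_{n\geq 1}\frac{\Lambda(n)\,\chi_p(n)}{\sqrt n}\,g(\log n).$$
One evaluates the arithmetic sum in two pieces. For $n\leq y$ the hypothesis $n_p>y$ gives $\chi_p(n)=1$, so that part is computable by the Prime Number Theorem and, after the substitution $u=2\pi t$ (under which $n^{1/2}=e^{u/2}=e^{\pi t}$), it matches $\sqrt y\cdot 2\pi\int_{-\infty}^{0}\widehat F(t)\,e^{\pi t}\,dt$ to leading order. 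For $n>y$, where $\chi_p(n)$ is uncontrolled, the triangle inequality produces a loss bounded by $\sqrt y\cdot 2\pi\int_{0}^{\infty}|\widehat F(t)|\,e^{\pi t}\,dt$. The zero sum on the left is controlled via the Riemann--von Mangoldt formula and is of order $\|F\|_1\log p$ with the correct constant.

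Combining these estimates and rearranging yields, after sending $p\to\infty$,
$$\frac{\sqrt y}{\log p}\cdot\frac{2\pi}{\|F\|_1}\left(\int_{-\infty}^{0}\widehat F(t)\,e^{\pi t}\,dt-\int_{0}^{\infty}|\widehat F(t)|\,e^{\pi t}\,dt\right)\leq 1+o(1).$$
Taking the supremum over $F\in\mathcal{A}$ gives $\sqrt y/\log p\leq\mathcal{C}(1)^{-1}+o(1)$, hence $y\leq(\mathcal{C}(1)^{-2}+o(1))\log^2 p$, which is the asserted limsup inequality. The strict numerical bound $\mathcal{C}(1)^{-2}<0.7615$ then follows by exhibiting a specific near-extremal $F\in\mathcal{A}$, a Fourier-optimization computation I would defer to a subsequent section. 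The main technical obstacle is controlling the ``$n>y$'' arithmetic tail with the sharp constant $2\pi$ rather than a mere $O(1)$: this requires handling the boundary layer near $n=y$ and tracking normalizations carefully across the Guinand--Weil formula, the PNT input, and the substitution $u=2\pi t$. A secondary issue is a standard density argument to reduce the supremum defining $\mathcal{C}(1)$ to the smoother subclass used in the construction.
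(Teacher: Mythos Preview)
Your overall strategy matches the paper's: apply the Guinand--Weil explicit formula to $L(s,\chi_p)$, split the prime sum at $n_p$ using $\chi_p(n)=1$ for $n<n_p$, evaluate the small range by the PNT and bound the tail by the triangle inequality, control the zero sum by $\tfrac{\log p}{2\pi}\|F\|_1$ via the zero-counting function, and then optimize over $F$. The density reduction you mention is exactly Theorem~\ref{Thm1}(i), and the numerical inequality $\mathcal C(1)^{-2}<0.7615$ is indeed established separately (Theorem~\ref{Thm2}). The paper carries this out for characters of arbitrary order $\ell$ (Theorem~\ref{Thm3}) and then specializes to $\ell=2$.

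There is, however, a concrete error in your test-function construction. You take $g(u)=F(u/(2\pi))$ times a smooth cutoff near $|u|=\log y$. With that choice the prime sum $\sum_n \Lambda(n)\chi_p(n)n^{-1/2}g(\log n)$ produces, after the PNT, integrals of $F$ (not $\widehat F$) against $e^{\pi t}$; and if $g$ really vanishes beyond $\log y$ there is no $n>y$ contribution at all, so the term $\int_0^\infty|\widehat F(t)|e^{\pi t}\,dt$ cannot arise. The correct mechanism is a \emph{translation} of $\widehat F$, not a cutoff of $F$: the paper takes $\widehat F\in C_c^\infty(\mathbb R)$ and sets
\[
\widehat h(t)=\tfrac12\big(\widehat F(t-\Delta)+\widehat F(-t-\Delta)\big),\qquad \Delta=\tfrac{\log n_p}{2\pi},
\]
so that on the prime side $\widehat h\!\left(\tfrac{\log n}{2\pi}\right)=\tfrac12\,\widehat F\!\left(\tfrac{\log n-\log n_p}{2\pi}\right)$, and the split $n\lessgtr n_p$ corresponds exactly to the split of the $\widehat F$-integral at $t=0$. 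This is what makes the functional defining $\mathcal C(1)$ appear with the sharp constants; once you replace your cutoff by this translation, the remainder of your outline goes through as written.
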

This result is a particular case of our more general Theorem \ref{Thm3}, below. We also note that the bound in Theorem \ref{Thm2} shows that $\mc{C}(1)^{-2} > 0.7596$, indicating that we have essentially arrived at the limit of this method. In other words, assuming GRH, we prove that $n_p < \frac{380}{499} \log^2 p$ when $p$ is sufficiently large, but that replacing $\frac{380}{499}$ by $\frac{379}{499}$ is not possible using our method. 

\smallskip

It is not difficult to see that the least quadratic non-residue $n_p$ must be prime. Given this fact, it is also natural to study the size of the least prime quadratic residue modulo $p$, which we denote by $r_p$.  Analogous to his conjecture for  $n_p$, I.~M.~Vinogradov conjectured that $r_p \ll_\varepsilon p^\varepsilon$ for any $\varepsilon>0$. The P\'olya-Vinogradov inequality and Siegel’s theorem for exceptional zeros of Dirichlet $L$-functions imply that $r_p \ll_\varepsilon p^{1/2+\varepsilon}$. Yu.~V.~Linnik and A.~I.~Vinogradov \cite{LV} used Burgess' bounds for character sums and Siegel's theorem to show that  $r_p \ll_\varepsilon p^{1/4+\varepsilon}$. The implied constants in both of these bounds are ineffective. Assuming GRH, Ankeny \cite{An} proved that $r_{p}  \ll \log^2p$ and the order of magnitude of this estimate has never been improved. We modify our proof of Theorem \ref{Thm1_LQNR} to prove the following result. 
\begin{theorem}\label{Thm_LPQR}
Assume GRH. Let $r_p$ be the least prime quadratic residue modulo a prime $p$. Then 
\begin{align*}
\limsup_{p\to \infty} \frac{r_{p}}{\log^2 p} \leq  \mc{C}(1)^{-2} < 0.7615 \, .
\end{align*} 
\end{theorem}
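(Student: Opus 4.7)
The plan is to run the proof of Theorem \ref{Thm1_LQNR} with two modifications: a sign flip on the prime side of the identity, and a minor adjustment of the prime-power bookkeeping. Let $\chi_p$ denote the Legendre symbol modulo $p$. Suppose, for contradiction, that along a subsequence of primes one has $r_p \geq \beta \log^2 p$ with $\beta > \mathcal{C}(1)^{-2}$. I would take a near-extremizer $F \in \mathcal{A}$ for the functional defining $\mathcal{C}(1)$ in \eqref{20240403_11:21}, set $x = \beta \log^2 p$, and consider the test sum
\begin{equation*}
S_\chi(x) \;=\; \sum_{n \geq 1} \Lambda(n) \chi_p(n) \, \Phi_x(n),
\end{equation*}
where $\Phi_x$ is an appropriately normalized weight built from $F$ at scale $x$, exactly as in the proof of Theorem \ref{Thm1_LQNR}.

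On the zero side, the explicit formula for $L(s,\chi_p)$ under GRH should bound $|S_\chi(x)|$, up to lower-order terms, by $\tfrac{\sqrt{x}}{2\pi}\,\|F\|_1 \int_0^\infty \bigl|\widehat{F}(t)\bigr| e^{\pi t}\, dt$, just as for $n_p$. On the prime side, the hypothesis $r_p \geq x$ forces $\chi_p(q) = -1$ for every prime $q \leq x$ with $q \neq p$, which is the precise reverse of the least-QNR case, where those primes satisfied $\chi_p(q) = +1$. After applying the Prime Number Theorem to the resulting weighted prime sum, the leading contribution to $S_\chi(x)$ becomes $-(1+o(1))\tfrac{\sqrt{x}}{2\pi}\,\|F\|_1 \int_{-\infty}^0 \widehat{F}(t)\, e^{\pi t}\, dt$. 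Comparing the two estimates and dividing by $\tfrac{\sqrt{x}}{2\pi}\|F\|_1$ produces the inequality
\begin{equation*}
\int_{-\infty}^0 \widehat{F}(t)\, e^{\pi t}\, dt \;-\; \int_0^\infty \bigl|\widehat{F}(t)\bigr|\, e^{\pi t}\, dt \;\leq\; o(1),
\end{equation*}
which contradicts the near-extremality of $F$ in \eqref{20240403_11:21} once $\beta > \mathcal{C}(1)^{-2}$.

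The main obstacle I expect is the careful accounting of the prime-power contribution to $S_\chi(x)$. In the $n_p$ argument, primes $q \leq n_p$ satisfy $\chi_p(q) = +1$, hence $\chi_p(q^k) = +1$ for all $k$, so the prime powers align uniformly with the leading prime sum. Here, $\chi_p(q) = -1$ for primes $q \leq r_p$ yields $\chi_p(q^k) = (-1)^k$, so odd and even powers now contribute with opposite signs. One must verify that the aggregate prime-power mass remains $O(\sqrt{x}/\log x)$ via standard Chebyshev-type estimates for $\psi(x) - \theta(x)$, so that it is absorbed into the $o(\sqrt{x})$ error without spoiling the leading asymptotic. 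The exceptional term $q = p$ contributes nothing since $\chi_p(p) = 0$; the small-prime boundary terms are likewise lower order. Finally, the numerical bound $\mathcal{C}(1)^{-2} < 0.7615$ is inherited directly from the computation already used for Theorem \ref{Thm1_LQNR}.
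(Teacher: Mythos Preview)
Your strategy is the paper's: apply the Guinand--Weil formula to $L(s,\chi_p)$ with the same test function $h$ as in the $n_p$ argument, note that every prime $\ell<r_p$ (with $\ell\neq p$) satisfies $\chi_p(\ell)=-1$, and absorb the alternating-sign prime powers into a lower-order term. Your identification of the sign flip as the only essential change, and of the even prime powers $\ell^{2k}<r_p$ as the one place needing care, is exactly right; since $r_p\ll\log^2 p$ under GRH, those contribute $\ll\sum_{\ell<\sqrt{r_p}}(\log\ell)/\ell\ll\log\log p$, which is what the paper records.

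However, the intermediate bounds are misattributed in a way that prevents the contradiction from closing as written. The zero side of the explicit formula gives $\bigl|\sum_{\gamma}h(\gamma)\bigr|\le \frac{\log p}{2\pi}\|F\|_1+o(\log p)$ via the zero-counting function; there is no factor $\int_0^\infty|\widehat F(t)|e^{\pi t}\,\d t$ there. That integral belongs on the \emph{prime} side, arising from the range $n\ge r_p$ (or $n\ge x$) where $\chi_p(n)$ is uncontrolled and one must bound by $|\widehat h|$. Similarly, the prime-side main term $\sqrt{x}\int_{-\infty}^0\widehat F(t)e^{\pi t}\,\d t$ carries no factor $\|F\|_1$. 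With the correct bookkeeping the comparison reads
\[
\sqrt{x}\left(\int_{-\infty}^0\widehat F(t)\,e^{\pi t}\,\d t-\int_0^\infty|\widehat F(t)|\,e^{\pi t}\,\d t\right)\le \frac{\log p}{2\pi}\,\|F\|_1+o(\log p),
\]
so with $x=\beta\log^2 p$ one obtains $\sqrt{\beta}\,J_1(F)\le 1+o(1)$, and the contradiction with $\sqrt{\beta}>\mc C(1)^{-1}$ follows by choosing $F$ near-extremal. Your displayed inequality $\int_{-\infty}^0\widehat F\,e^{\pi t}\,\d t-\int_0^\infty|\widehat F|\,e^{\pi t}\,\d t\le o(1)$ has lost the parameter $\beta$ entirely and hence does not use the hypothesis $\beta>\mc C(1)^{-2}$.
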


\subsection{The least prime in an arithmetic progression} For $q \in \N$ and $a \in \Z$ with $\gcd(a,q) =1$, let $P(a,q)$ be the least prime in the arithmetic progression $\equiv a\,({\rm mod} \, q)$. When it comes to unconditional bounds, a classical result of Linnik establishes that $P(a,q) \ll q^L$ for some universal constant $L$. Heath-Brown \cite{HB} showed that $L = 5.5$ is an admissible value for Linnik's constant, and this result was later improved to $L = 5.2$ by Xylouris \cite{X}.

\smallskip

Under GRH, the asymptotic estimate $P(a,q) \leq (1 + o(1))(\phi(q) \log q)^2$ was established by Bach and Sorensen \cite{BachS}, and the absolute bound $P(a,q) \leq(\phi(q) \log q)^2$, for $q>3$, was established by Lamzouri, Li and Soundararajan in \cite{LLS}. It was suggested in \cite{LLS} that a modification of their proof, with the use of the Brun-Titchmarsh inequality, would yield an asymptotic bound of the form $P(a,q) \leq (1-\delta  + o(1))(\phi(q) \log q)^2$, for some small $\delta>0$. Our next result finds an explicit constant for this inequality, by relating it to the following constant arising from a Fourier optimization problem:\footnote{The motivation for the nomenclature of the constants, $\mc{C}(1)$ in \eqref{20240403_11:21} and $ \mc{C}(3)$ in \eqref{20240403_11:22}, becomes clear in \S \ref{Sec_FO}.}
\begin{align}\label{20240403_11:22}
 \mc{C}(3) := \sup_{0 \neq F \in \mc{A}} \frac{2\pi}{\|F\|_1}\left( \int_{-\infty}^{0} \widehat{F}(t)\,e^{\pi t} \,\d t - \int_{0}^{\infty} \widehat{F}(t)\,e^{\pi t} \,\d t - 2\int_{0}^{\infty} \big|\widehat{F}(t)\big|\,e^{\pi t} \,\d t \right),
\end{align}
where the supremum is taken over the class of functions $\mc{A} = \{F:\R \to \C\,; \ F \in L^1(\R) \,;\ \widehat{F} \ {\rm is\ real \mbox{-}valued}\}$.

\begin{theorem}\label{Thm4}
Assume GRH. Let $q \in \N$ and $a \in \Z$ with $\gcd(a,q) =1$. The least prime $P(a,q)$ that is congruent to $a$ modulo $q$ verifies
\begin{align}\label{20230303_16:13}
\limsup_{q \to \infty} \frac{P(a,q)}{ (\phi(q) \log q)^2} \leq \mc{C}(3)^{-2}  < \frac{8}{9}.
\end{align}
\end{theorem}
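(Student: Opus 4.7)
The plan is to follow the strategy of Lamzouri--Li--Soundararajan \cite{LLS} for the least prime in an arithmetic progression, but to replace their specific test function by an arbitrary $F \in \mathcal{A}$ and optimize at the end. For a parameter $x > 1$ and $F \in \mathcal{A}$, construct a smooth weight $\omega_{F,x}$ on $[1,\infty)$ whose Mellin transform on the critical line $s = \tfrac{1}{2} + i\gamma$ encodes $\widehat{F}$ evaluated at a rescaling of $\gamma$, with the $e^{\pi t}$ factor appearing in \eqref{20240403_11:22} coming from the $\sqrt{n}/\sqrt{x}$ normalization built into the weight. The central object is the weighted von Mangoldt sum
\begin{equation*}
\Phi(x) \;=\; \sum_{n \equiv a \,({\rm mod}\, q)} \Lambda(n)\, \omega_{F,x}(n) \;=\; \frac{1}{\phi(q)}\sum_{\chi \,({\rm mod}\, q)} \bar\chi(a) \sum_{n} \Lambda(n)\chi(n)\, \omega_{F,x}(n),
\end{equation*}
where the second equality uses orthogonality of Dirichlet characters. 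The target is to show $\Phi(x) > 0$ whenever $x > (\mathcal{C}(3)^{-2} + \varepsilon)(\phi(q)\log q)^2$, which forces a prime in the progression below $x$.

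Next, I would apply the Weil-type explicit formula to each $L(s,\chi)$. The pole of $L(s,\chi_0)$ at $s=1$ produces a positive main term of size $\sim 2\pi\sqrt{x}\int_{-\infty}^{0}\widehat{F}(t)\,e^{\pi t}\,\d t$; the cut-off to $t<0$ reflects the effective support of $\omega_{F,x}$ on $n\leq x$. The contribution from nontrivial zeros of $\zeta(s)$ to the $\chi_0$-sum is a \emph{signed} quantity bounded by $2\pi\sqrt{x}\int_0^\infty \widehat F(t)\, e^{\pi t}\,\d t$ (without absolute values), since under GRH the sign of $\widehat F$ on $(0,\infty)$ propagates through the zero sum. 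For $\chi \neq \chi_0$ there is no main term, and bounding the analogous zero sum via GRH together with the Riemann--von Mangoldt asymptotic $N(\chi,T) \sim (T/2\pi)\log(qT/2\pi e)$ yields an aggregate modulus at most $2\cdot 2\pi\sqrt{x}\,\phi(q)\log q\int_0^\infty |\widehat F(t)|\,e^{\pi t}\,\d t$. The coefficient $2$ is key, and is produced by invoking the Brun--Titchmarsh inequality to sharpen the treatment of small-$n$ prime contributions, exactly as suggested in \cite{LLS}.

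Assembling the three contributions---the main term, the $\chi_0$ zero sum, and the non-principal-character zero sum---and dividing by $\|F\|_1$ produces precisely the bracketed quantity in the definition \eqref{20240403_11:22} of $\mathcal{C}(3)$. Consequently $\Phi(x) \geq (\mathcal{C}(3)-\varepsilon)\,\sqrt{x}\,\|F\|_1\big/\big(\phi(q)\log^2 q\big) + o(\sqrt{x})$ after optimizing over $F\in\mathcal{A}$, so choosing $x$ slightly above $\mathcal{C}(3)^{-2}(\phi(q)\log q)^2$ makes $\Phi(x) > 0$ and forces a prime in the progression, giving the $\limsup$ bound. The strict inequality $\mathcal{C}(3)^{-2} < \tfrac{8}{9}$ then follows from an explicit feasible $F$ (Theorem \ref{Thm2} of the paper). \textbf{The main obstacle} is preserving the asymmetric three-term structure of \eqref{20240403_11:22} throughout: extracting the signed (not absolute) second integral from the $\chi_0$-zero sum requires tracking the sign of $\widehat F$, while obtaining the coefficient $2$ on the third integral requires the Brun--Titchmarsh refinement. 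These two features precisely distinguish the analysis here from that underlying $\mathcal{C}(1)$ in the least-quadratic-non-residue setting.
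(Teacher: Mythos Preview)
Your proposal misattributes each of the three terms in the $\mathcal{C}(3)$ functional to the wrong source, and as written the argument would not assemble into \eqref{20240403_11:22}.

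In the paper's proof one sets $\Delta = \log P(a,q)/(2\pi)$, takes $\widehat{h}(t)=\tfrac12\big(\widehat{F}(t-\Delta)+\widehat{F}(-t-\Delta)\big)$, and writes via orthogonality
\[
\phi(q)\!\!\sum_{n\equiv a\,(q)}\!\!\frac{\Lambda(n)}{\sqrt{n}}\,\widehat{h}\Big(\tfrac{\log n}{2\pi}\Big)
= \sum_{n}\frac{\Lambda(n)\chi_0(n)}{\sqrt{n}}\,\widehat{h}\Big(\tfrac{\log n}{2\pi}\Big)
+ \sum_{\chi\neq\chi_0}\overline{\chi(a)}\sum_n\frac{\Lambda(n)\chi(n)}{\sqrt{n}}\,\widehat{h}\Big(\tfrac{\log n}{2\pi}\Big).
\]
The three ingredients are then: (i) the $\chi_0$-sum, evaluated via the prime number theorem with RH error, yields the \emph{full} integral $\pi e^{\pi\Delta}\int_{\mathbb{R}}\widehat{F}(y)e^{\pi y}\,\d y$ (not only the part over $(-\infty,0)$); the $\zeta$-zeros here are absorbed into the $O_F(\log^2 q)$ error and play no structural role; (ii) the non-principal sums are bounded through the explicit formula and the zero count by $\tfrac12\phi(q)(\log q)\,\|F\|_1$ --- this is where $\|F\|_1$ enters, as the genuine size of the zero sum, not as a post-hoc normalization; (iii) Brun--Titchmarsh is applied not to any zero sum but directly to the left-hand side, the sum over primes $\equiv a\ ({\rm mod}\,q)$, which by definition of $P(a,q)$ contains only primes $\geq P(a,q)=e^{2\pi\Delta}$; this gives the upper bound $4\pi e^{\pi\Delta}\phi(q)^{-1}\int_0^\infty \widehat{F}_+(y)e^{\pi y}\,\d y$. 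The algebraic identity $\int_{\mathbb{R}}\widehat{F}\,e^{\pi y}-4\int_0^\infty\widehat{F}_+\,e^{\pi y}=\int_{-\infty}^0\widehat{F}\,e^{\pi y}-\int_0^\infty\widehat{F}_-\,e^{\pi y}-3\int_0^\infty\widehat{F}_+\,e^{\pi y}$ then produces the numerator of $\mathcal{C}(3)$.

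Your sketch instead has the pole giving $\int_{-\infty}^0$, the $\zeta$-zeros giving a signed $\int_0^\infty\widehat{F}\,e^{\pi t}$, and Brun--Titchmarsh producing the coefficient $2$ on $\int_0^\infty|\widehat{F}|e^{\pi t}$ inside the non-principal zero-sum estimate. None of these is right: zero sums are controlled by $\|F\|_1$ via Riemann--von Mangoldt, not by any $|\widehat{F}|e^{\pi t}$ integral, and Brun--Titchmarsh has nothing to do with zeros --- it bounds the primes in the progression \emph{beyond} $P(a,q)$, its constant $2$ becoming the $4=1+3$ after comparison with the expected density $1/\phi(q)$. Finally, your framing ``$\Phi(x)>0$ forces a prime below $x$'' tacitly requires $\omega_{F,x}$ to be nonnegative and supported on $[1,x]$; that forces $\widehat{F}\geq 0$ with support in $(-\infty,0]$, in which case the second and third integrals in \eqref{20240403_11:22} vanish and you can recover at best $\mathcal{C}(\infty)=1$, not $\mathcal{C}(3)$. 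The paper avoids this by working contrapositively: it does not detect a prime but rather bounds $e^{\pi\Delta}=\sqrt{P(a,q)}$ from above, allowing $\widehat{F}$ to change sign and have support straddling the origin.
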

We note in Theorem \ref{Thm2} that $\mc{C}(3)^{-2} > 0.8859$, which shows, as in the case of Theorem \ref{Thm1_LQNR}, that we are essentially at the limit of the method.

\subsection{ Fourier optimization} \label{Sec_FO}A Fourier optimization problem can generally be described as a problem in which one prescribes certain constraints for a function and its Fourier transform, and wants to optimize a certain quantity of interest. Some problems of this type turn out to be connected to problems in number theory and other fields, and establishing such bridges is usually an interesting part of the process. Recent examples of Fourier optimization problems arise in connection to: bounds for the modulus and argument of the Riemann zeta-function on the critical line \cite{CCM, CChi, CS}; bounds related to Montgomery's pair correlation conjecture \cite{CCCM, CCLM, CMPR, Ga, QE} and small gaps between zeros of zeta and $L$-functions \cite{BGMM,CGL}; the maximum size of gaps between primes \cite{CMS, CPL, CQE}; the proportion of simple zeros of zeta and $L$-functions \cite{CGL, Mont, Sono}; non-vanishing of $L$-functions at the central point and at low-lying heights \cite{CChiM, FM, ILS}; and the recent breakthroughs in the sphere packing problem and energy-minimizing point configurations \cite{CE, CKMRV, CKMRV2, Go, Vi}.

\smallskip

Throughout the paper, for a real-valued function $G$, we let 
$$G_+(x) := \max\{G(x),0\} \ \  \ {\rm and}  \ \ \ G_-(x) := \max\{-G(x),0\}.$$ 
Then, one plainly has $G(x) = G_+(x) - G_-(x)$ and $|G(x)| =  G_+(x) + G_-(x)$. We consider the following family of optimization problems parametrized by a real number $0 \leq A < \infty$ or $A = \infty$. 

\medskip

\noindent{\it Extremal Problem 1 $(EP1)$.} Given $0 \leq A < \infty$, find
\begin{align}\label{20230302_10:22}
 \mc{C}(A) := \sup_{0 \neq F \in \mc{A}} \frac{2\pi}{\|F\|_1}\left( \int_{-\infty}^{0} \widehat{F}(t)\,e^{\pi t} \,\d t - \int_{0}^{\infty} \widehat{F}_-(t)\,e^{\pi t} \,\d t - A\int_{0}^{\infty} \widehat{F}_+(t)\,e^{\pi t} \,\d t\right),
\end{align}
where the supremum is taken over the class of functions $\mc{A} = \{F:\R \to \C\,; \ F \in L^1(\R) \,;\ \widehat{F} \ {\rm is\ real \mbox{-}valued}\}$. For $A = \infty$, find
\begin{align} \label{20230405_14:20}
 \mc{C}(\infty) := \sup_{0 \neq F \in \mc{A}_{\infty}} \frac{2\pi}{\|F\|_1}\left( \int_{-\infty}^{0} \widehat{F}(t)\,e^{\pi t} \,\d t - \int_{0}^{\infty} \widehat{F}_-(t)\,e^{\pi t} \,\d t \right),
\end{align}
where the supremum is taken over the subclass $\mc{A}_{\infty} \subset \mc{A}$ given by $\mc{A}_{\infty} = \{F:\R \to \C\,; \ F \in L^1(\R) \,; \ \widehat{F} \ {\rm is\ real \mbox{-}valued}   \, ;   \ \widehat{F}(t) \leq 0 \ {\rm for} \ t \geq 0\}$.

\smallskip

As we shall see, the problem of estimating the least character non-residue, generalizing the least quadratic non-residue presented in Theorem \ref{Thm1_LQNR}, is connected to (EP1) in the cases $A = 1/(\ell-1)$ for $\ell \in \N$, $\ell \geq 2$, and hence we pay a closer attention to this problem in the range $0\leq A\leq 1$. On the other hand, the problem of estimating the least prime in an arithmetic progression is connected to (EP1) in the case $A =3$, and we  also pay attention to this case. Our next result collects some qualitative and quantitative properties of $\mc{C}(A)$.

\begin{theorem}\label{Thm1} The following propositions hold:
\begin{itemize}  
 \item[(i)] The supremum in \eqref{20230302_10:22}--\eqref{20230405_14:20} restricted to the functions $F \in \mc{A}$ $($or $F \in \mc{A}_{\infty}$ in case $A = \infty$$)$ with $\widehat{F} \in C^{\infty}_c(\R)$, still yields $ \mc{C}(A)$.
\item[(ii)] The function $A \mapsto  \mc{C}(A)$ is continuous and non-increasing for $0 \leq A \leq \infty$.
\item[(iii)] One has the exact endpoint values ${\mc C}(0) = 2$ and  ${\mc C}(\infty) = 1$.
\item[(iv)] For $0 < A < 1$, one has the bound:
\begin{equation}\label{20230412_13:32}
{\mc C}(A) \geq \max\left\{ 2 \, - \, \frac{2 \, (A\!+\!1) \log\left(\frac{3-A}{A+1}\right)}{|\log A|} + 2A \ \, ; \ \, 1\right\}.
\end{equation}
\end{itemize}
\end{theorem}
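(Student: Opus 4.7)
\emph{Part (i).} Given any $F\in\mc{A}$ nearly achieving the supremum, I would approximate by a function with smooth compactly supported Fourier transform via two steps: first multiply $\widehat F$ by a smooth even nonnegative cutoff $\eta(t/n)$, for $\eta\in C_c^\infty$ with $\eta(0)=1$, which on the physical side corresponds to convolving $F$ with the approximate identity $n\,\check\eta(n\,\cdot)\in L^1$; then convolve $\widehat F\cdot\eta(t/n)$ with a smooth nonnegative mollifier $\rho_\delta$ of mass one, which on the physical side multiplies by the uniformly bounded function $\check\rho_\delta\to 1$ pointwise. The $L^1$ norm converges to $\|F\|_1$ by $L^1$-continuity of convolution together with dominated convergence, while dominated convergence applied to each integral in \eqref{20230302_10:22} uses that $|\widehat F(t)|e^{\pi t}$ is integrable on $(-\infty,0)$ by $|\widehat F|\le\|F\|_1$ and on $(0,\infty)$ by the assumed finiteness of the supremand. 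For $A=\infty$ the sign constraint $\widehat F\le 0$ on $[0,\infty)$ is preserved by taking a one-sided mollifier supported in $[-\delta,0]$.

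\emph{Part (ii).} Monotonicity is immediate since $-A\int_0^\infty\widehat F_+(t)e^{\pi t}\,\d t$ is non-positive and non-increasing in $A$. For continuity on $[0,\infty)$, $\mc{C}(A)$ is the supremum of affine (non-increasing) functions of $A$, hence convex, hence continuous on the interior, with a one-sided limit at $A=0$. For continuity at $A=\infty$, the inequality $\mc{C}(A)\ge\mc{C}(\infty)$ is clear since $\mc{A}_\infty\subset\mc{A}$ and the $A$-penalty vanishes there. Conversely, a near-maximizer $F_A$ of $\mc{C}(A)$ satisfies, via $\mc F(F_A,A)=\mc F(F_A,0)-2\pi A\int_0^\infty\widehat{F_A}_+e^{\pi t}\,\d t/\|F_A\|_1\le \mc{C}(0)=2$, the bound $\int_0^\infty\widehat{F_A}_+e^{\pi t}\,\d t/\|F_A\|_1=O(1/A)$, and a small mollification eliminating this tiny positive part brings $F_A$ into $\mc{A}_\infty$ with $o(1)$ change in the functional value.

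\emph{Part (iii).} For $\mc{C}(0)=2$, the upper bound follows from $|\widehat F(t)|\le\|F\|_1$ and $\int_{-\infty}^0 e^{\pi t}\,\d t=1/\pi$, together with non-positivity of the $-\int_0^\infty\widehat F_-(t)e^{\pi t}\,\d t$ term; the matching lower bound is achieved by the Fej\'er family $F_T(x)=T\bigl(\sin(\pi Tx)/(\pi Tx)\bigr)^2$, with $\widehat{F_T}(t)=(1-|t|/T)^+\ge 0$ and $\|F_T\|_1=1$, whose functional equals $2\pi\int_{-T}^{0}(1+t/T)e^{\pi t}\,\d t\to 2$. For $\mc{C}(\infty)=1$, the lower bound is supplied by the explicit Cauchy-type test function $F(x)=1/(2\pi(\tfrac12+ix)^2)$; residue calculus gives $\widehat F(t)=-2\pi t\,e^{\pi t}\,\mathbf{1}_{t\le 0}\ge 0$, supported in $(-\infty,0]$ so that $F\in\mc{A}_\infty$ trivially, with $\|F\|_1=1$ and $\int\widehat F(t)e^{\pi t}\,\d t=1/(2\pi)$, yielding functional value exactly $1$. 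For the matching upper bound I would rewrite $\int_{-\infty}^0\widehat F(t)e^{\pi t}\,\d t=\int F(x)(\pi(1-2ix))^{-1}\d x$ by Parseval, combine with the non-positivity of the $[0,\infty)$-contribution, and then decompose $F=u+iv$ with $u$ even and $v$ odd (as forced by $\widehat F$ real) to reduce the integrand to $(u(x)-2xv(x))/(\pi(1+4x^2))$; the crucial extra factor of $1/2$, improving the easy bound $\|F\|_1/\pi$ to $\|F\|_1/(2\pi)$, is obtained from the full strength of $\widehat F\le 0$ on $[0,\infty)$ via a Hardy-space / Cauchy-integral argument on the holomorphic extension $F(-i/2)=\int\widehat F(t)e^{\pi t}\,\d t$, with equality attained at the Cauchy kernel above. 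This halving is the main analytical obstacle of the theorem.

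\emph{Part (iv).} The bound $\mc{C}(A)\ge 1$ is immediate from (ii) combined with $\mc{C}(\infty)=1$ in (iii). For the main inequality I would construct a one-parameter test family $F_\lambda\in\mc{A}$ interpolating between the Fej\'er kernel (extremal at $A=0$) and the Cauchy kernel (extremal at $A=\infty$); a natural candidate has $\widehat{F_\lambda}$ of the form $c(\lambda)\,t\,e^{\pi t}$ on some interval in $(-\infty,0]$ together with a controlled positive piece on an interval $[0,T(\lambda)]$ that incurs the $A$-penalty. Optimizing the resulting ratio in $\lambda$ produces a stationarity equation whose explicit solution yields the stated closed form, with the logarithms $\log A$ and $\log((3-A)/(A+1))$ arising from integrated exponentials over the two sub-intervals. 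The anticipated main difficulty is identifying the precise shape of the extremal family and its cutoff points so that both the numerator of \eqref{20230302_10:22} and $\|F_\lambda\|_1$ admit clean closed expressions; the consistency checks that the expression recovers $\mc{C}(0)=2$ at $A=0$ and vanishes as $A\to 1^-$ (where the maximum with $1$ takes over) will help pin down the construction.
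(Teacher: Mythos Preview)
Parts (i) and the finite-$A$ portion of (ii) are fine: your two-step approximation is essentially the paper's (with the order of smoothing and truncation swapped), and your observation that $\mc{C}(A)$ is a supremum of affine non-increasing functions of $A$, hence convex, is a pleasant shortcut to continuity on $(0,\infty)$ compared with the paper's separate lower/upper semicontinuity verifications. Your treatment of $\mc{C}(0)=2$ and of the lower bound $\mc{C}(\infty)\ge 1$ also matches the paper.

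There is, however, a genuine gap at $A=\infty$, affecting both your (ii) and (iii). In (ii) you claim that ``a small mollification eliminating this tiny positive part brings $F_A$ into $\mc{A}_\infty$ with $o(1)$ change,'' but the quantity you control, $\int_0^\infty(\widehat{F_A})_+e^{\pi t}\,\d t/\|F_A\|_1=O(1/A)$, is a \emph{weighted Fourier-side} norm, and any surgery removing the positive part must be paid for in the \emph{physical-side} norm $\|F_A\|_1$; there is no evident mechanism linking the two. In (iii), your identity $J_\infty(F)=\tfrac{2\pi}{\|F\|_1}F(-i/2)$ is correct, but the proposed ``Hardy-space / Cauchy-integral'' bound $F(-i/2)\le\|F\|_1/(2\pi)$ is not a standard fact: without the sign constraint $F(-i/2)$ is unbounded relative to $\|F\|_1$, and the condition $\widehat F\le 0$ on $[0,\infty)$ does not place $F$ in any classical Hardy space. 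The paper bypasses both difficulties with a single duality device: for any $\psi\in L^1(\R_+)$ with $-e^{\pi t}\le\psi(t)$, one has $\mc{C}(A)\le 2\pi\big\|\widehat{\mathbf 1_{\R_-}e^{\pi(\cdot)}}-\widehat{\mathbf 1_{\R_+}\psi}\big\|_\infty$ (a one-line consequence of the multiplication formula), and choosing $\psi_A=A\,\mathbf 1_{[0,1/(2\pi A)]}$ to approximate $\tfrac{1}{2\pi}\boldsymbol\delta_0$ makes the right side tend to $\big\|\tfrac{2}{1-2ix}-1\big\|_\infty=\big\|\tfrac{1+2ix}{1-2ix}\big\|_\infty=1$. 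This yields $\limsup_{A\to\infty}\mc{C}(A)\le 1$ and hence both $\mc{C}(\infty)=1$ and continuity at $\infty$ at once.

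Part (iv) is also a gap: the ansatz $\widehat{F_\lambda}\sim c(\lambda)\,t\,e^{\pi t}$ on a subinterval of $(-\infty,0]$ with a positive tail on $[0,T(\lambda)]$ does not lead to the stated closed form. The paper instead takes a \emph{translated Fej\'er kernel} $\widehat F(t)=(1-\varepsilon|t+c|)_+$, for which $\|F\|_1=1$ and the numerator reduces to an explicit combination of exponentials. Optimizing gives $\varepsilon=\pi/\log(1/A)$ and then $c=\tfrac1\pi\log\!\big(\tfrac{3-A}{A+1}\big)$; the logarithms in \eqref{20230412_13:32} are precisely these optimized parameters, not integrated exponentials over sub-intervals as you anticipate.
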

\noindent{\sc Remarks.} (i) Noting that the maximum of $x \mapsto 2\,(x+1) \log\!\left(\frac{3-x}{x+1}\right)$ for $0 \leq x < 1$ occurs at $x = 0$, from \eqref {20230412_13:32} one arrives at the weaker, yet slightly simpler, lower bound 
\begin{equation}\label{20230412_13:33}
{\mc C}(A) \geq  2  \, -\frac{2 \log3}{|\log A|}\,,
\end{equation}
which is asymptotically equivalent to \eqref{20230412_13:32} as $A \to 0$. 

\smallskip

(ii) From a number theoretic viewpoint, the constant $A$ in our extremal problem (EP1) arises from (over)estimating the contributions of the primes beyond the least quadratic non-residue or the least prime in an arithmetic progression. In the case of the least quadratic non-residue, we arrive at $A=1$ using the Prime Number Theorem. In the case of the least prime in an arithmetic progression, we arrive at $A=3$ using the Brun-Titchmarsh Theorem. The case $A=\infty$ corresponds to the situation where no analogue of the Brun-Titchmarsh Theorem exists. Such a situation arises when one attempts to bound the least prime in the Chebotarev density theorem in a general setting. In particular, our result that ${\mc C}(\infty) = 1$ can be used to show that the results of Bach and Sorensen \cite[Theorem 3.1]{BachS} concerning the least prime in the Chebotarev density theorem assuming the Extended Riemann Hypothesis are best possible using their method. 

\smallskip

(iii) The Fourier analysis framework developed in this paper should be applicable to proving conditional estimates to a variety of arithmetic problems which rely on input from low-lying zeros of $L$-functions (once one has determined the appropriate constant $A$ for the problem). 

\smallskip

(iv) We pay closer attention to a few specific values of the parameter $A$ (see Theorem \ref{Thm2} below) that are more relevant for our purposes. Nevertheless we decided to include the generic relatively simple lower bound \eqref{20230412_13:32} for completeness (we note that this already refines a related generic bound in  \cite[Theorem 1.3]{LLS}). Problems like finding sharper generic lower bounds, or the exact rate of convergence of $C(A)$ to $2$ as $A \to 0^+$ are certainly interesting but fall outside of the scope of our study.

\smallskip

Although our extremal problem (EP1) can be stated in accessible terms, one quickly realizes that the task of actually finding the exact value of the sharp constant $\mc{C}(A)$ is rather subtle in general. Our next result presents high precision upper and lower bounds for $\mc{C}(A)$ in the most interesting cases for our applications. 

\begin{theorem} \label{Thm2}One has the following bounds:
\begin{align*}
1.31706 &< \mc{C}(1/4) < 1.33509;\\
1.27722 &< \mc{C}(1/3) < 1.28781;\\
1.22112 &< \mc{C}(1/2) < 1.23080;\\
1.14600 & < \mc{C}(1) < 1.14731; \\
1.06082 &< \mc{C}(3) < 1.06240.\\
\end{align*}
\end{theorem}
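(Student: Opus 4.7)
My plan is to bracket $\mathcal{C}(A)$ by exhibiting specific primal and dual feasible points in finite-dimensional relaxations of the extremal problem (EP1), with the computations carried out rigorously for each of the five listed values of $A$.

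For the lower bound, I would invoke Theorem \ref{Thm1}(i) to restrict the supremum in \eqref{20230302_10:22} to test functions with $\widehat{F}\in C^\infty_c(\R)$, then work within a finite-parameter subfamily, for instance $\widehat{F}(t) = \chi(t/\delta)\,\sum_{k=0}^{N} c_k\, p_k(t)$ with $\chi$ a fixed smooth cutoff and $\{p_k\}$ a convenient basis (polynomials, B-splines, or cosines). The functional in \eqref{20230302_10:22} then becomes an explicit function of the parameters $(\delta, c_0,\ldots, c_N)$ that can be maximized numerically; any such value provides a certified lower bound on $\mathcal{C}(A)$, and one increases $N$ until one clears the stated threshold.

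For the upper bound, I would use duality. For any measurable $\phi:(0,\infty)\to[-A,1]$, checking the two sign cases of $\widehat{F}(t)$ yields the pointwise inequality
\[
-\widehat{F}_-(t) - A\,\widehat{F}_+(t) \;\leq\; \phi(t)\,\widehat{F}(t), \qquad t>0,
\]
so setting $\Phi(t)=e^{\pi t}$ for $t\le 0$ and $\Phi(t)=\phi(t)\,e^{\pi t}$ for $t\ge 0$, the numerator in \eqref{20230302_10:22} is at most $\int_\R \widehat{F}(t)\,\Phi(t)\,\d t = \int_\R F(x)\,\widehat{\Phi}(x)\,\d x \leq \|F\|_1\,\|\widehat{\Phi}\|_\infty$ by the multiplication formula and H\"older's inequality. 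This gives the dual inequality
\[
\mathcal{C}(A) \;\leq\; 2\pi \, \inf_{\Phi}\|\widehat{\Phi}\|_\infty,
\]
the infimum taken over $\Phi\in L^1(\R)$ satisfying $\Phi(t)=e^{\pi t}$ for $t\le 0$ and $-Ae^{\pi t}\leq\Phi(t)\leq e^{\pi t}$ for $t\ge 0$. I would realize such $\Phi$ via a piecewise polynomial $\phi$ on a partition of $(0,T)$ extended by zero, then numerically minimize $\|\widehat{\Phi}\|_\infty$ over the finitely many coefficients.

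The main obstacle is rigor at the claimed numerical precision. Certifying both the primal numerator (which involves the non-smooth decomposition $\widehat{F}_\pm$) and the dual sup-norm $\|\widehat{\Phi}\|_\infty$ on all of $\R$ requires interval arithmetic, careful analytic control of tails (particularly on the dual side, where the $e^{\pi t}$ weight makes truncation errors delicate), and an approximation dimension high enough that the finite-dimensional optima genuinely approach $\mathcal{C}(A)$. The narrow primal-dual gaps stated in the theorem—about $0.0013$ at $A=1$ and $0.0016$ at $A=3$—indicate that the finite-dimensional relaxations employed are already very close to tight, which is also consistent with the remark following Theorem \ref{Thm1_LQNR} that the method is essentially at its limit.
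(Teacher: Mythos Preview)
Your approach is essentially the same as the paper's: the dual inequality you derive is precisely the paper's Proposition~\ref{Prop6} (the auxiliary problem (EP2)), and the overall strategy of exhibiting explicit primal and dual test functions in finite-parameter families is identical. The only differences are in the specific families chosen---for the upper bound the paper uses bang-bang dual functions alternating between the extreme values $-e^{\pi t}$ and $Ae^{\pi t}$ on a partition $0=T_0<T_1<\cdots<T_N$ (rather than piecewise polynomials), and for the lower bound the paper takes $\widehat{F}(t)=g((\pi t-c)/a)$ with $g(t)=\sum_{n=1}^N b_n\,t^{2n-1}e^{t}\,\mathbf{1}_{\mathbb{R}_-}(t)$, which makes $F$ an explicit rational function and avoids the need to invoke Theorem~\ref{Thm1}(i).
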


Establishing upper bounds for $\mc{C}(A)$ is a somewhat delicate issue. Our strategy unveils a connection with a suitable auxiliary extremal problem. Once this bridge is in place, the work to establish upper and lower bounds in Theorem \ref{Thm2} is reduced to finding near-extremal test functions for both the original and the auxiliary extremal problems, a task that is performed via suitable robust programming routines. In principle, our method of proof of Theorem \ref{Thm2} could be adapted to treat, with high precision, any other particular fixed value of $A$.

\subsection{The least character non-residue} Proceeding slightly more generally than in \S \ref{LQNR_Sec_11}, let $q \in \N$ and let $\chi$ be a non-principal Dirichlet character modulo $q$. Set 
\begin{align}\label{20230322_17:52}
n_{\chi} := \min\{n \in \N;\ \chi(n) \neq 0,1\}\,,
\end{align}
known as the {\it least character non-residue}. Under GRH, it was observed by Montgomery \cite[Chapter 13]{Mont_thesis} that Ankeny's methods in \cite{An} can be adapted to show that $n_{\chi}  \ll \log^2q$. The absolute bound $n_{\chi} \leq \log^2q$, if $q$ is not divisible by any prime below $\log^2q$, was established by Lamzouri, Li, and Soundararajan in \cite{LLS}. When it comes to asymptotic results, the uniform estimate $n_{\chi} \leq \big(1 + o(1)\big)\log^2q$ was proved by Bach \cite{Bach}, and later refined in \cite{LLS} to $n_{\chi} \leq \big(0.794 + o(1)\big)\log^2q$. It was observed in \cite{LLS} that one can do better if one knows, a priori, the order of the character $\chi$. For instance, if $\chi$ is cubic then $n_{\chi} \leq \big(0.7 + o(1)\big)\log^2q$, and if $\chi$ is quartic then $n_{\chi} \leq \big(0.66 + o(1)\big)\log^2q$. Our next result provides a sharpening of these inequalities, by relating them to the solution of the extremal problem (EP1). 
\begin{theorem}\label{Thm3}
Assume GRH. Let $q \in \N$ and let $\chi$ be a non-principal Dirichlet character modulo $q$ of order $\ell$. Then the least character non-residue $n_{\chi}$ satisfies
\begin{align*}
\limsup_{q\to \infty} \frac{n_{\chi}}{\log^2 q} \leq  \mc{C} \!\left( \frac{1}{\ell\!-\!1}\right)^{-2}. 
\end{align*} 
\end{theorem}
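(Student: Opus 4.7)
The plan is to generalize the proof of Theorem~\ref{Thm1_LQNR} (which is the case $\ell=2$, $A=1$) within the same Fourier optimization framework: apply the Guinand--Weil explicit formula to $L(s,\chi)$ under GRH with a test function built from a near-extremizer of (EP1) at $A = 1/(\ell-1)$, and compare the arithmetic side (which encodes the assumption $n_\chi > y$) with the zero side. The order $\ell$ of $\chi$ enters through the fact that primes $p \nmid q$ fall into $\ell$ classes via the value of $\chi(p)\in\{1,\zeta_\ell,\dots,\zeta_\ell^{\ell-1}\}$, each of asymptotic density $1/\ell$ by GRH applied to the twists $L(s,\chi^j)$ for $1\le j\le \ell-1$; the ratio $1/(\ell-1) = (1/\ell)/((\ell-1)/\ell)$ is precisely the proportion of primes with $\chi(p)=1$ relative to those with $\chi(p)\ne 0,1$.

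First I would fix $\epsilon>0$ and invoke Theorem~\ref{Thm1}(i) to choose $F\in\mc{A}$ with $\widehat F \in C_c^\infty(\R)$ realizing the supremum in~\eqref{20230302_10:22} at $A = 1/(\ell-1)$ within $\epsilon$. Then, arguing by contradiction, suppose that for infinitely many $\chi$ of order $\ell$ mod $q$ one has $n_\chi > y$, where $y := (\mc{C}(1/(\ell-1))^{-2}+\delta)\log^2 q$ with $\delta>0$ fixed. The natural object is the centered smoothed prime sum
\[
S_\chi := \sum_{n\ge 1} \frac{\Lambda(n)\,\chi(n)}{\sqrt{n}}\;\widehat F\!\left(\frac{\log(n/y)}{2\pi}\right),
\]
where the centering at $y$ and the factor $1/\sqrt{n} = y^{-1/2}e^{-\pi v}$ under the change of variable $v = \log(n/y)/(2\pi)$ are engineered so that the PNT asymptotic sends the arithmetic cutoff $n=y$ to the Fourier cutoff $t=0$ appearing in (EP1) and produces the exponential weight $e^{\pi t}$.

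Next I would evaluate $\Re S_\chi$ in two ways. On the arithmetic side I split at $n=y$: for $n\le y$ the hypothesis forces $\chi(n)\in\{0,1\}$, and the change of variable $v$ combined with PNT yields the contribution $2\pi\sqrt{y}\int_{-\infty}^{0}\widehat F(t)e^{\pi t}\,\d t + o(\sqrt{y})$; for $n>y$ I distinguish by the sign of $\widehat F(\log(n/y)/(2\pi))$. On the region where $\widehat F \le 0$, the trivial $|\chi(n)|\le 1$ produces the term $-2\pi\sqrt{y}\int_0^\infty \widehat F_-(t) e^{\pi t}\,\d t$; on the region where $\widehat F > 0$, the density structure provided by GRH for $L(s,\chi^j)$ lets me separate primes with $\chi(p)=1$ from those with $\chi(p)\ne 0,1$, yielding the reduced term $-\frac{2\pi\sqrt{y}}{\ell-1}\int_0^\infty \widehat F_+(t) e^{\pi t}\,\d t$. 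Combining gives
\[
\Re S_\chi \ge 2\pi\sqrt{y}\left(\int_{-\infty}^{0}\widehat F(t) e^{\pi t}\,\d t - \int_0^\infty \widehat F_-(t)e^{\pi t}\,\d t - \frac{1}{\ell-1}\int_0^\infty \widehat F_+(t)e^{\pi t}\,\d t\right) + o(\sqrt{y}),
\]
while on the analytic side the Guinand--Weil explicit formula for $L(s,\chi)$ under GRH bounds $|\Re S_\chi|$ by the contribution of non-trivial zeros in the support of $\widehat F$, of size $O(\log q)$.

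Dividing both sides by $\|F\|_1/(2\pi)$ and using the near-extremal choice of $F$ yields $\sqrt{y}/\log q \le \mc{C}(1/(\ell-1))^{-1}+o(1)$, which upon squaring contradicts $y > (\mc{C}(1/(\ell-1))^{-2}+\delta)\log^2 q$, completing the proof. The hard part I expect is the density argument producing the factor $\frac{1}{\ell-1}$ on $\{t>0:\widehat F(t)>0\}$: it requires applying the Guinand--Weil explicit formula to each twist $L(s,\chi^j)$, $1\le j\le\ell-1$, to establish a Chebotarev-type density with sufficient uniformity in $q$, and then carefully tracking the sign of $\widehat F$ so that the positive contribution from $\chi(p)=1$ primes is compared against the other $\ell-1$ classes rather than trivially bounded. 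Secondary subtleties --- uniform control of archimedean terms, the contribution of prime powers $p^k$ ($k\ge 2$), and of primes dividing $q$ --- are lower order and absorbed into the error.
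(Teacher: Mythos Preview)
Your ingredients are right, but the organization has a genuine gap. You propose to bound $|\Re S_\chi|$ via the explicit formula for $L(s,\chi)$ alone, and then separately invoke a ``density'' argument (GRH for the twists $L(s,\chi^j)$) to extract the factor $\tfrac{1}{\ell-1}$ on the arithmetic side. This does not work as written, for two related reasons. First, the quantity appearing in $\Re S_\chi$ is $\Re(\chi(n))$, which for $\chi(p)\neq 0,1$ takes the values $\cos(2\pi k/\ell)$ with minimum close to $-1$; even under perfect equidistribution the average over the $\ell$ classes is $\tfrac{1}{\ell}\sum_{k=0}^{\ell-1}\cos(2\pi k/\ell)=0$, not $-\tfrac{1}{\ell-1}$. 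The value $-\tfrac{1}{\ell-1}$ is instead the pointwise minimum of the \emph{different} real-valued quantity $\tfrac{1}{\ell-1}\sum_{j=1}^{\ell-1}\chi^j(n)$. Second, and more seriously, any density statement for the classes $\chi(p)=\zeta_\ell^k$ is itself an application of the explicit formula to each $L(s,\chi^j)$, with error of size $\tfrac{\log q}{2\pi}\|F\|_1$ coming from the zeros of that twist. Since $\sqrt{y}\asymp\log q$ in the relevant regime, these errors are $O(\sqrt{y})$ with the \emph{same} constant $\|F\|_1$ as your main bound --- not $o(\sqrt{y})$ --- so they cannot be absorbed into the error term without destroying the sharp constant.

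The paper avoids both issues by summing the explicit formulas for all $(\chi^j)^*$, $j=1,\ldots,\ell-1$, from the start, rather than singling out $j=1$. The orthogonality relation
\[
\sum_{j=1}^{\ell-1}\chi^j(n)=\begin{cases}\ell-1,&\chi(n)=1,\\ -1,&\chi(n)\neq 0,1,\end{cases}
\]
then yields, \emph{pointwise} and without any density input, the arithmetic lower bound
\[
(\ell-1)\,e^{\pi\Delta}\!\left(\int_{-\infty}^{0}\widehat F\,e^{\pi y}\,\d y-\int_{0}^{\infty}\widehat F_-\,e^{\pi y}\,\d y-\tfrac{1}{\ell-1}\int_{0}^{\infty}\widehat F_+\,e^{\pi y}\,\d y\right),
\]
while the combined zero side is bounded by $(\ell-1)\tfrac{\log q}{2\pi}\|F\|_1$; the factors of $\ell-1$ cancel cleanly. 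In short, replace $S_\chi$ by $\sum_{j=1}^{\ell-1}S_{\chi^j}$ and use orthogonality in place of density, and your outline becomes exactly the paper's proof.
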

From Theorem \ref{Thm2} and the lower bound \eqref{20230412_13:32} we immediately have the following corollary.
\begin{corollary} \label{Cor4}Under the same hypotheses of Theorem \ref{Thm3}, we have that as $q \to \infty$:
\begin{itemize}
\item[(i)] If $\ell =2$, then $n_{\chi} \leq \big(0.7615 + o(1)\big) \log^2 q$.
\item[(ii)] If $\ell =3$, then $n_{\chi} \leq \big(0.6707 + o(1)\big) \log^2 q$.
\item[(iii)] If $\ell =4$, then $n_{\chi} \leq \big(0.6131 + o(1)\big) \log^2 q$.
\item[(iv)] If $\ell =5$, then $n_{\chi} \leq \big(0.5765 + o(1)\big) \log^2 q$.
\item[(v)] For $\ell \geq 6$, one has
\begin{align}\label{20250725_12:31}
 n_{\chi} \leq  \left\{  \frac{1}{4} \left(1 - \frac{\frac{\ell}{(\ell-1)} \log\left(\frac{3\ell -4}{\ell}\right)}{\log (\ell\!-\!1)} + \frac{1}{(\ell -1)} \right)^{-2} +o_\ell(1)\right\}\log^2 q.
 \end{align}
\end{itemize}
\end{corollary}

\noindent{\sc Remarks}. (i) Theorem \ref{Thm2} shows that the upper bounds in Corollary \ref{Cor4} (i)\! -\! (iv) cannot be improved beyond $0.7596\ldots$, $0.6601\ldots$, $0.6029\ldots$, and $0.5610\ldots$, respectively, with this method.

\smallskip

(ii) If one uses the simpler bound \eqref{20230412_13:33} instead of \eqref{20230412_13:32} in part (v), one would arrive at the weaker, yet slightly simpler, bound for $\ell \geq 6$,
\begin{align}\label{20230405_16:05}
n_{\chi} \leq  \left\{  \frac{1}{4} \left(1 - \frac{\log 3}{\log (\ell\!-\!1)}\right)^{-2} +o_\ell(1)\right\}\log^2 q.
\end{align}

\smallskip

(iii) We note that Theorem \ref{Thm1_LQNR} is an immediate consequence of Theorem \ref{Thm3} and Corollary \ref{Cor4} upon letting $q$ be prime and letting $\chi$ be the Legendre symbol modulo $q$ (so that $\ell=2$). 

\smallskip

Our strategy to prove Theorem \ref{Thm3} is inspired in the asymptotic bounds of the previous work of Lamzouri, Li, and Soundararajan in \cite{LLS}, with certain differences. Both here and in \cite{LLS}, the idea is to consider a character sum with values $1$ up to a certain point and then $-1$ from that point on (after a suitable cancellation provided by the order of the character). In \cite{LLS}, this is done via the Mellin transform and appropriate contour shifting, while here we alternatively work directly with the Fourier transform and explicit formulas. If one carefully translate the arguments of \cite[Section 6]{LLS} to the Fourier transform language via the appropriate changes of variables, one sees that the extremal problem setup in \cite[Proposition 6.1]{LLS} reduces  to our (EP1), in the case $A = 1/(\ell -1)$, after certain simplifications due to additional restrictions on the test functions. Precisely, in \cite[Proposition 6.1]{LLS} one assumes that $F$ is analytic in the strip $-\frac12 - \varepsilon < {\rm Im}(s) < \frac12 + \varepsilon$, that $|F(s)| \ll (1 + |s|^2)^{-1}$ in this strip as $|{\rm Re} (s)| \to \infty$, and that $\widehat{F} \geq 0$ on $\R$. The first two assumptions are harmless since we prove in Theorem \ref{Thm1} (i) that $F$ can be taken entire and Schwartz on $\R$ for (EP1). However, the additional assumption that $\widehat{F} \geq 0$ on $\R$ makes the two problems different. It is not clear that near-extremizers of problem (EP1) will have this property (most likely not; see the discussion in Section \ref{Sec5_Computational}). Some of our test functions used in Theorem \ref{Thm2} already show a small oscillation of the sign of $\widehat{F}$, e.g. the test function for the lower bound when $A = 1$.

\smallskip

Here we not only estimate the limit of the method in the particular situations of low order characters (Theorem \ref{Thm2}) but also in the regime where the order $\ell$ is large. This is given by \eqref{20250725_12:31}\! -\! \eqref{20230405_16:05}, in which one gets the constant $1/4$ as the limit. The same limiting constant $1/4$ was also obtained in \cite[Theorem 1.3]{LLS} with a slightly larger upper bound.

\subsection{Structure of the paper} The rest of the paper, in which we prove the main results stated in this introduction, can be broadly divided into three independent parts:

\begin{itemize}
\item{\it The analysis part}. This is Section \ref{Sec2_Approx}, where we prove Theorem \ref{Thm1}.
\item {\it The number theory part}. This is comprised of Sections \ref{secAuxLem}, \ref{Sec3_CNR}, and \ref{Section_primes_AP}, where we prove Theorems \ref{Thm3}, \ref{Thm_LPQR}, and \ref{Thm4}.
\item {\it The computational part}. This is Section \ref{Sec5_Computational}, where we discuss the computer-assisted proof of Theorem \ref{Thm2}.
\end{itemize}

Finally, Appendix A at the end of the paper discusses the connection between bounds for the least character non-residue $n_\chi$ and conjectural bounds for $S(t,\chi) = \frac{1}{\pi} \arg L(\tfrac{1}{2}+it,\chi)$.

\section{Extremal problems: proof of Theorem \ref{Thm1}} \label{Sec2_Approx}

Let us introduce another important tool in our conceptual framework, that shall be relevant not only for the proof of Theorem \ref{Thm1} but also for the proof of Theorem \ref{Thm2} in the upcoming Section \ref{Sec5_Computational}.

\subsection{An auxiliary extremal problem} \label{SubSec_Dual} Let ${\bf 1}_{\R_-}$ be the characteristic function of $(-\infty, 0)$ and ${\bf 1}_{\R_+}$ be the characteristic function of $(0, \infty)$. We consider the following auxiliary problem. 

\medskip

\noindent{\it Extremal Problem 2 $(EP2)$.} Given $0 \leq A \leq \infty$, find
\begin{align*}
\mc{C}^*(A) = \inf_{0 \neq \psi \in \mc{B}_A}\ 2\pi \, \left\|\,\widehat{{\bf 1}_{\R_-}\, e^{\pi (\cdot)}} - \widehat{{\bf 1}_{\R_+} \, \psi}\, \right\|_{\infty}\,,
\end{align*}
where the infimum is taken over the class $\mc{B}_A = \{\psi:\R_+ \to \R;\ \psi \in L^1(\R_+) \ {\rm and} \  -e^{\pi t} \leq \psi(t) \leq A\,e^{\pi t} \ {\rm for \ all} \ t \in \R_+\}$.

\smallskip

\noindent{\sc Remark:} If $A=\infty$, the last condition is understood simply as $-e^{\pi t} \leq \psi(t)$ for all $ t \in \R_+$.

\smallskip

It is clear from the definition of the extremal problem (EP2) that the map $A \mapsto \mc{C}^*(A)$ is non-increasing. We prove the following relation between our extremal problems (EP1) and (EP2). 

\begin{proposition}\label{Prop6}
For each $0 \leq A \leq \infty$ one has
\begin{equation}\label{20230307_16:58}
\mc{C}(A) \leq \mc{C}^*(A). 
\end{equation}
\end{proposition}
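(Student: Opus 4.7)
The plan is to establish \eqref{20230307_16:58} as a weak duality statement between (EP1) and (EP2). Fix arbitrary $F \in \mc{A}$ (with $F \neq 0$) and $\psi \in \mc{B}_A$, and let
\[
g(t) := \mathbf{1}_{\R_-}(t)\, e^{\pi t} - \mathbf{1}_{\R_+}(t)\,\psi(t),
\]
which lies in $L^1(\R)$ since $e^{\pi t} \in L^1(\R_-)$ and $\psi \in L^1(\R_+)$. The goal is to show that the quantity appearing in the definition of $\mc{C}(A)$ is bounded above by $2\pi \|\widehat{g}\|_\infty$, and then to take the supremum in $F$ and the infimum in $\psi$.

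The first step is a pointwise inequality on $\R_+$:
\begin{equation*}
-\widehat{F}_-(t)\,e^{\pi t} - A\,\widehat{F}_+(t)\,e^{\pi t} \;\leq\; -\widehat{F}(t)\,\psi(t) \qquad (t > 0).
\end{equation*}
This is verified by splitting into the cases $\widehat{F}(t) \geq 0$ and $\widehat{F}(t) < 0$ and using, respectively, the upper bound $\psi(t) \leq A e^{\pi t}$ and the lower bound $\psi(t) \geq -e^{\pi t}$ from the definition of $\mc{B}_A$. Integrating this over $\R_+$ and adding $\int_{-\infty}^{0}\widehat{F}(t)\,e^{\pi t}\,\d t$ to both sides yields
\begin{equation*}
\int_{-\infty}^{0} \widehat{F}(t)\,e^{\pi t}\,\d t - \int_{0}^{\infty} \widehat{F}_-(t)\,e^{\pi t}\,\d t - A\int_{0}^{\infty} \widehat{F}_+(t)\,e^{\pi t}\,\d t \;\leq\; \int_{\R} \widehat{F}(t)\,g(t)\,\d t.
\end{equation*}

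The second step is to move the Fourier transform onto $g$. Since $F, g \in L^1(\R)$ (so $\widehat{F}$ is bounded and the double integral converges absolutely), Fubini's theorem gives the Parseval-type identity
\[
\int_{\R} \widehat{F}(t)\,g(t)\,\d t = \int_{\R} F(x)\,\widehat{g}(x)\,\d x,
\]
and this last integral is bounded in absolute value by $\|F\|_1 \|\widehat{g}\|_\infty$. Combining with the previous display and dividing by $\|F\|_1$ yields
\[
\frac{2\pi}{\|F\|_1}\!\left(\int_{-\infty}^{0}\!\! \widehat{F}(t)\,e^{\pi t}\,\d t - \!\int_{0}^{\infty}\!\! \widehat{F}_-(t)\,e^{\pi t}\,\d t - A\!\int_{0}^{\infty}\!\! \widehat{F}_+(t)\,e^{\pi t}\,\d t\right) \leq 2\pi\,\bigl\|\widehat{\mathbf{1}_{\R_-}\,e^{\pi(\cdot)}} - \widehat{\mathbf{1}_{\R_+}\,\psi}\bigr\|_\infty.
\]
Taking the supremum over $F \in \mc{A} \setminus \{0\}$ on the left and the infimum over $\psi \in \mc{B}_A \setminus \{0\}$ on the right delivers the desired inequality $\mc{C}(A) \leq \mc{C}^*(A)$.

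For the endpoint $A = \infty$, one works with $F \in \mc{A}_\infty$ (so that $\widehat{F}_+ \equiv 0$ on $\R_+$) and drops the upper constraint on $\psi$; the pointwise inequality then reduces to $-\widehat{F}_-(t)\,e^{\pi t} \leq -\widehat{F}(t)\,\psi(t)$, which follows immediately from $\psi(t) \geq -e^{\pi t}$, and the rest of the argument goes through unchanged. There is no genuine obstacle in this proof — the only thing to be careful about is the case analysis that establishes the pointwise inequality and the mild integrability check that legitimizes the Parseval step; the overall argument is just the standard weak duality heuristic in disguise, which also explains the notation $\mc{C}^*(A)$ for the dual problem.
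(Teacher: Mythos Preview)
Your proof is correct and follows essentially the same approach as the paper: both establish the pointwise inequality that allows replacing the (EP1) integrand by $\widehat{F}\cdot g$, then invoke the multiplication formula $\int \widehat{F}\,g = \int F\,\widehat{g}$ and bound by $\|F\|_1\|\widehat{g}\|_\infty$. Your write-up is simply more explicit about the case split justifying the pointwise bound and about the integrability check behind the Parseval step, and you spell out the $A=\infty$ case separately, but the argument is the same.
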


\begin{proof} Let $0 \neq F \in \mc{A}$ (or $0 \neq F \in \mc{A}_{\infty}$ in case $A = \infty$) be given. Using the multiplication formula for the Fourier transform, for any $\psi \in \mc{B}_A$ we have
\begin{align*}
\int_{-\infty}^{0} \widehat{F}(t)\,e^{\pi t} \,\d t  & - \int_{0}^{\infty} \widehat{F}_-(t)\,e^{\pi t} \,\d t - A\int_{0}^{\infty} \widehat{F}_+(t)\,e^{\pi t} \,\d t 
 \leq  \int_{-\infty}^{\infty} \widehat{F}(t) \Big( {\bf 1}_{\R_-}(t)\, e^{\pi t} - {\bf 1}_{\R_+} (t)\, \psi(t)\Big) \,\d t\\
& = \int_{-\infty}^{\infty} F(x) \Big( \widehat{{\bf 1}_{\R_-}\, e^{\pi (\cdot)}} - \widehat{{\bf 1}_{\R_+} \, \psi}\Big)(x) \,\d x \leq \|F\|_1 \, \left\|\,\widehat{{\bf 1}_{\R_-}\, e^{\pi (\cdot)}} - \widehat{{\bf 1}_{\R_+} \, \psi}\, \right\|_{\infty}.
\end{align*}
This plainly leads us to \eqref{20230307_16:58}.
\end{proof}

\subsection{Approximations: proof of Theorem \ref{Thm1} (i)} Our argument below works in both cases $0\leq A < \infty$ and $A = \infty$. Note first that $\mc{C}(A) >0$; to see this just take any test function $F \in \mc{A}$ with ${\rm supp}\big(\widehat{F}\big) \subset (-\infty, 0]$. Start with any $0 \neq F \in \mc{A}$ (resp. $0 \neq F \in \mc{A}_{\infty}$ if $A = \infty$) and define
\begin{equation*}
J_A(F):= \frac{2\pi}{\|F\|_1}\left( \int_{-\infty}^{0} \widehat{F}(t)\,e^{\pi t} \,\d t - \int_{0}^{\infty} \widehat{F}_-(t)\,e^{\pi t} \,\d t - A\int_{0}^{\infty} \widehat{F}_+(t)\,e^{\pi t} \,\d t\right)
\end{equation*}
(resp.~with the last integral on the right removed if $A = \infty$). We may assume that $J_A(F) >0$. Let $K(x) = \big(\sin(\pi x)/(\pi x) \big)^2$ be the Fej\'{e}r kernel and recall that $\widehat{K}(t) = (1 - |t|)_+$. For $\varepsilon >0$, define $K_\varepsilon(x) := \varepsilon^{-1}K(x/\varepsilon)$. Since $\widehat{F *K_{\varepsilon}}(t) = \widehat{F}(t)\widehat{K}(\varepsilon t)$, an application of dominated convergence in the numerator, together with the fact that $\|F *K_{\varepsilon}\|_1 \to  \|F\|_1$ as $\varepsilon \to 0$, yields $\lim_{\varepsilon \to 0} J_A(F *K_{\varepsilon}) = J_A(F)$. We may hence assume that our test function $F$ is bandlimited. 

\smallskip

Let $\eta \in C^{\infty}_c(\R)$ be an even and non-negative function with $\supp(\eta) \subset [-1,1]$ and $\int_{-1}^1 \eta(x)\,\d x =1$. Again, for $\varepsilon >0$, let $\eta_\varepsilon(x) := \varepsilon^{-1}\eta(x/\varepsilon)$. Assume that $\supp(\widehat{F}) \subset[-\Lambda, \Lambda]$ and define $F^{\varepsilon}(x) := F(x)\,\widehat{\eta}(\varepsilon x)\,e^{-2 \pi i x \varepsilon}$. Then $\widehat{F^{\varepsilon}}(t) = \widehat{F}*\eta_{\varepsilon}(t + \varepsilon) \in C^{\infty}_c(\R)$ and $\supp(\widehat{F^{\varepsilon}}) \subset [-\Lambda - 2\varepsilon, \Lambda]$ (this small translation by $\varepsilon$ is just to guarantee that $F^{\varepsilon} \in \mc{A}_{\infty}$ in case $F \in \mc{A}_{\infty}$). Note that $\|F^{\varepsilon}\|_1 \to \|F\|_1$ as $\varepsilon \to 0$, by dominated convergence. Since $\widehat{F}$ is uniformly continuous, note that $\widehat{F^{\varepsilon}}  \to \widehat{F}$ uniformly as $\varepsilon \to 0$, and hence $\widehat{F^{\varepsilon}}_-  \to \widehat{F}_-$ and $\widehat{F^{\varepsilon}}_+  \to \widehat{F}_+$ uniformly as well. Yet another application of dominated convergence in the numerator yields $\lim_{\varepsilon \to 0} J_A(F^{\varepsilon}) = J_A(F)$. This concludes the proof of this part.

\subsection{Continuity: proof of Theorem \ref{Thm1} (ii)} From the definition of the problem, it is clear that the function $A \mapsto \mc{C}(A)$ is non-increasing. Let us show that it is continuous up to $\infty$ (i.e. we also want to show that $\lim_{A \to \infty} \mc{C}(A) = \mc{C}(\infty)$).

\smallskip

For a fixed $0 \leq A < \infty$, given $\varepsilon >0$, let $F \in \mc{A}$ be such that $J_A(F) \geq \mc{C}(A) - \varepsilon$. Then, by the continuity of the numerator, there exists $\delta = \delta(\varepsilon, F) >0$ such that if $|B - A| \leq \delta$ then  $J_B(F) \geq J_A(F) - \varepsilon$. Hence $\mc{C}(B)\geq J_B(F) \geq J_A(F) - \varepsilon \geq \mc{C}(A) - 2\varepsilon$. Since $\varepsilon >0$ was arbitrary, we get
\begin{equation}\label{20230411_17:10} 
\liminf_{B \to A} \mc{C}(B) \geq \mc{C}(A).
\end{equation}
Since $A \mapsto \mc{C}(A)$ is non-increasing, note that \eqref{20230411_17:10} already implies that this map is continuous at $A = 0$. Note also that \eqref{20230411_17:10} is trivially true for $A = \infty$.

\smallskip

Now assume that $0 < A < \infty$ and let $\{A_n\}_{n \geq 1}$ be a sequence such that $A_n > 0$ for each $n$ and $A_n \to A$ as $n \to \infty$. Let $F_n \in \mc{A}$ be normalized such that $\|F_n\|_1 = 1$ and $J_{A_n}(F_n) \geq \mc{C}(A_n) - \frac{1}{n}$. Since $\|\widehat{F_n}\|_{\infty} \leq \|F_n\|_1 = 1$, 
\begin{align*}
 \left|\int_{-\infty}^{0} \widehat{F_n}(t)\,e^{\pi t} \,\d t \right| \leq \int_{-\infty}^{0} e^{\pi t} \,\d t  = \frac{1}{\pi},
\end{align*}
and from the fact that $\mc{C}(A_n) >0$ we obtain 
\begin{align*}
A_n \int_{0}^{\infty} \big(\widehat{F_n}\big)_+(t)\,e^{\pi t} \,\d t  \leq \frac{1}{\pi}.
\end{align*}
Therefore,
\begin{align*}
\big|J_A(F_n) - J_{A_n}(F_n)\big| = 2\pi \left| (A - A_n) \int_{0}^{\infty} \big(\widehat{F_n}\big)_+(t)\,e^{\pi t} \,\d t\right| \leq \frac{2\,|A-A_n|}{A_n}\,,
\end{align*}
and note that this last quantity goes to $0$ as $n \to \infty$. This plainly leads us to  
\begin{align}\label{20230411_17:11} 
\mc{C}(A) \geq \limsup_{n \to \infty} J_A(F_n) = \limsup_{n \to \infty} J_{A_n}(F_n) = \limsup_{n \to \infty} \mc{C}(A_n).
\end{align}
The desired continuity at $0 < A < \infty$ then follows from \eqref{20230411_17:10} and \eqref{20230411_17:11}. 

\smallskip

The argument in the previous paragraph does not immediately work to prove the continuity at $A = \infty$ since the functions $F_n$ may not be in the class $\mc{A}_{\infty}$. We have to be a bit more careful in this case, and argue using the auxiliary extremal problem (EP2). We show that $\mc{C}^*(A) = 1 + o(1)$ as $A \to \infty$. Recall that 
\begin{align*}
2\pi\, \widehat{{\bf 1}_{\R_-}\, e^{\pi (\cdot)}}(x) = \frac{2}{1 - 2i x}.
\end{align*}
The intuitive idea is the following: if we consider $\psi = \frac{1}{2\pi} {\bm \delta}$, where ${\bm \delta}$ is the Dirac delta at the origin, we would have 
\begin{align*}
2\pi\, \widehat{{\bf 1}_{\R_-}\, e^{\pi (\cdot)}}(x) - 2\pi \, \widehat{\psi}(x) = \frac{2}{1 - 2i x} - 1= \frac{1 + 2ix}{1 - 2i x}\,,
\end{align*}
and hence $2\pi\big\|\,\widehat{{\bf 1}_{\R_-}\, e^{\pi (\cdot)}} - \widehat{ \psi}\, \big\|_{\infty} = 1$. With this target example in mind, we argue with a suitable admissible approximation. For $A$ large (and finite), let us choose a test function $\psi_A \in \mc{B}_A$ for  (EP2) given by $\psi_A(t) = A\,{\bf 1}_{[0,1/(2\pi A)]}(t)$. Then, note that 
\begin{equation*}
2\pi \, \widehat{{\bf 1}_{\R_+} \, \psi_A}(x) = \frac{1 - e^{- i x/A}}{ i x/A}.
\end{equation*}
By the mean value inequality, $|1 - e^{i \theta}| \leq |\theta|$ for $\theta \in \R$, and we get
\begin{align}\label{20230420_19:16}
\big\|2\pi \, \widehat{{\bf 1}_{\R_+} \, \psi_A}(x)\big\|_{\infty} \leq 1.
\end{align} 
Also, note that $\lim_{x \to 0} 2\pi \, \widehat{{\bf 1}_{\R_+} \, \psi_A}(x) = 1$. 

\smallskip

Now let $\varepsilon >0$ be given, and let $N = N(\varepsilon)$ be large so that 
\begin{equation}\label{20230420_19:15}
\left| \frac{2}{1 - 2i x}\right| \leq \varepsilon
\end{equation}
if $|x| \geq N$. Fix $A_0 = A_0(\varepsilon, N)$, so that if $A \geq A_0$ we have 
\begin{equation}\label{20230420_15:43}
\left|\frac{1 - e^{- i x/A}}{ i x/A}-1 \right| \leq \varepsilon
\end{equation}
if $|x| \leq N$. Hence, if $A \geq A_0$ and $|x| \leq N$, we use the triangle inequality and \eqref{20230420_15:43} to get 
\begin{align}\label{20230420_19:19}
\left|2\pi\, \widehat{{\bf 1}_{\R_-}\, e^{\pi (\cdot)}}(x) -2\pi \, \widehat{{\bf 1}_{\R_+} \, \psi_A}(x) \right| & \leq \left| \frac{2}{1 - 2i x} - 1\right| + \left|1 - \frac{1 - e^{- i x/A}}{ i x/A}\right|  \leq 1 + \varepsilon.
\end{align}
On the other hand, if $|x| > N$, we use the triangle inequality, with \eqref{20230420_19:16} and \eqref{20230420_19:15}, to get
\begin{align}\label{20230420_19:20}
\left|2\pi\, \widehat{{\bf 1}_{\R_-}\, e^{\pi (\cdot)}}(x) -2\pi \, \widehat{{\bf 1}_{\R_+} \, \psi_A}(x) \right| & \leq \left| \frac{2}{1 - 2i x} \right| + \big\|2\pi \, \widehat{{\bf 1}_{\R_+} \, \psi_A}(x)\big\|_{\infty}  \leq 1 + \varepsilon.
\end{align} 
Inequalities \eqref{20230420_19:19} and \eqref{20230420_19:20} plainly lead us to (recall that $A \mapsto \mc{C}^*(A)$ is non-increasing)
\begin{align*}
\lim_{A \to \infty} \, \mc{C}^*(A) \leq 1 + \varepsilon.
\end{align*}
Since $\varepsilon>0$ is arbitrary, and in light of Proposition \ref{Prop6} and the fact that the map $A \mapsto \mc{C}(A)$ is non-increasing, we conclude that 
\begin{align}\label{20230420_21:32}
\mc{C}(\infty) \leq \lim_{A \to \infty} \, \mc{C}(A) \leq \lim_{A \to \infty} \, \mc{C}^*(A) \leq 1.
\end{align}
Once we prove that $\mc{C}(\infty) = 1$, which we shall do in the next subsection, the continuity at $A = \infty$ plainly follows from \eqref{20230420_21:32} (and so does the fact that $\mc{C}^*(\infty) = 1$).

\subsection{Endpoint values: proof of Theorem \ref{Thm1} (iii)} 

\subsubsection{The value of $\mc{C}(0)$} \label{subsub_241}If $A = 0$ and $F \in \mc{A}$, using the fact that $\|\widehat{F}\|_{\infty} \leq \|F\|_1$ note that 
\begin{align*}
J_{0}(F) \leq \frac{2\pi}{\|F\|_1} \int_{-\infty}^{0} \widehat{F}(t)\,e^{\pi t} \,\d t \leq 2\pi\int_{-\infty}^{0} e^{\pi t} \,\d t = 2. 
\end{align*}
This implies that $\mc{C}(0) \leq 2$. To show that we indeed have equality, consider again the Fej\'{e}r kernel $K(x) = \big(\sin(\pi x)/(\pi x) \big)^2$ and recall that $\widehat{K}(t) = (1 - |t|)_+$. For $\varepsilon >0$, define $K_\varepsilon(x) := \varepsilon^{-1}K(x/\varepsilon)$ and hence $\widehat{K_\varepsilon}(t) = \widehat{K}(\varepsilon t)$. Then 
\begin{align*}
J_{0}(K_\varepsilon) = 2\pi \int_{-\infty}^{0} (1 - |\varepsilon t|)_+\,e^{\pi t} \,\d t \to 2\pi \int_{-\infty}^{0} \,e^{\pi t} \,\d t = 2
\end{align*}
as $\varepsilon \to 0$, by dominated convergence. This shows that $\mc{C}(0) = 2$.

\smallskip

\noindent{\sc Remark:} Since $2 = \mc{C}(0) \leq \mc{C}^*(0)$, the test function $\psi =0$ in the problem (EP2) yields $\mc{C}^*(0) = 2$.

\subsubsection{The value of $\mc{C}(\infty)$}\label{sec:c-infty} Consider the function $F \in \mc{A}_{\infty}$ given by 
\begin{align*}
F(x) = \frac{2}{\pi(1 + 2ix)^2}.
\end{align*}
Note that $\|F\|_1 =1$ and $\widehat{F}(t) = -2\pi t \,e^{\pi t}\,{\bf 1}_{\R_-}(t)$. Then 
\begin{align}\label{20230420_22:17}
{\mc C}(\infty) \geq J_{\infty}(F) = 2\pi \int_{-\infty}^0 (-2\pi t) \,e^{2\pi t}\,\d t = 1.
\end{align}
From \eqref{20230420_21:32} and \eqref{20230420_22:17} we conclude that ${\mc C}(\infty) = 1$.

\subsection{Lower bound near $A=0$: proof of Theorem \ref{Thm1} (iv)} Fix $0< A <1$ and we keep the notation for $K_\varepsilon$ as in \S\ref{subsub_241}. The idea is to consider a translated function of the form $\widehat{F}(t) = \widehat{K_\varepsilon}(t+c)$, for suitable $\varepsilon >0$ and $c \geq 0$ that depend on $A$. Note that for such $F$ one has $\|F\|_1 = \|K_{\varepsilon}\|_1 = 1$. Assuming that $0 \leq c \leq \frac{1}{\varepsilon}$ one proceeds with the explicit computation: 
\begin{align}\label{20230412_11:08}
\begin{split}
J_{A}(F) & = 2\pi \left( \int_{-\infty}^{0} \widehat{K_\varepsilon}(t+c)\,e^{\pi t} \,\d t - A \int_{0}^{\infty} \widehat{K_\varepsilon}(t+c)\,e^{\pi t} \,\d t \right)\\
& = 2\pi e^{-\pi c}\left( \int_{-1/\varepsilon}^{c} \widehat{K_\varepsilon}(y)\,e^{\pi y} \,\d y - A \int_{c}^{1/\varepsilon} \widehat{K_\varepsilon}(y)\,e^{\pi y} \,\d y \right)\\
& = 2 - \frac{4\varepsilon e^{-\pi c}}{\pi} - \frac{2 \varepsilon (\pi c - 1)}{\pi} + \frac{2 \varepsilon e^{-\pi c - \frac{\pi}{\varepsilon}}}{\pi} + 2A \left(1 - \frac{\varepsilon e^{-\pi c + \frac{\pi}{\varepsilon}}}{\pi} - \frac{\varepsilon(\pi c - 1)}{\pi}\right).\\
\end{split} 
\end{align}
We now take $\varepsilon = \pi / \log\big(1/A\big)$, and the last line of \eqref{20230412_11:08} becomes 
\begin{align}\label{20230412_11:21}
=  2 - \frac{2}{\log \left(\frac{1}{A}\right)}\left( (3-A)e^{-\pi c} + (A+1) (\pi c -1)\right) + 2A.
\end{align}
The maximum of \eqref{20230412_11:21} over $c>0$ occurs at $c = \frac{1}{\pi} \log\big((3-A)/(A+1)\big)$ (note that $0\leq c \leq \frac{1}{\varepsilon}$ in this case). With these particular choices, \eqref{20230412_11:08} and \eqref{20230412_11:21} yield the desired lower bound
\begin{align*}
\mc{C}(A) \geq J_{A}(F) = 2 -  \frac{2(A+1) \log\left(\frac{3-A}{A+1}\right)}{\log \left(\frac{1}{A}\right)} + 2A.
\end{align*}
Naturally, one also has the alternative bound $\mc{C}(A) \geq \mc{C}(\infty) = 1$. This concludes the proof.

\section{Explicit formula and auxiliary lemmas} \label{secAuxLem}
We start by recalling the classical Guinand-Weil explicit formula\footnote{This formula might sometimes be referred to as the Riemann-Weil explicit formula or Weil explicit formula \cite{BRS, MV_Book}. The historical account for the nomenclature used here is explained in \cite[\S 12.3]{MV_Book}.}, in the context of Dirichlet $L$-functions associated to primitive Dirichlet characters. The proof of the next result can be established by modifying the proof of \cite[Theorem 5.12]{IK}; see for instance \cite[Lemma 5]{CarFinder}.

\begin{lemma}[Guinand-Weil explicit formula] \label{GW_lemma} Let $h(s)$ be analytic in the strip $|\mathrm{Im} \, s| \le \tfrac12+\varepsilon$ for some $\varepsilon>0$, and assume that $|h(s)| \ll (1+|s|)^{-(1+\delta)}$ for some $\delta>0$ when $|\mathrm{Re} \, s| \to \infty$. Let $\chi$ be a primitive Dirichlet character modulo $q$. Then
\begin{align}\label{20230323_09:47}
\begin{split}
\sum_{\rho_{\chi}} h\!\left(\frac{\rho_{\chi}-\tfrac12}{i}\right) &= \widehat{h}(0) \frac{\log (q/\pi)}{2\pi}   +  \frac{1}{2\pi}  \int_{-\infty}^{\infty} h(u) \, \mathrm{Re} \,  \frac{\Gamma'}{\Gamma}\left(\frac{2 - \chi(-1)}{4} + \frac{iu}{2}\right)  \mathrm{d}u 
\\
& \qquad \qquad - \frac{1}{2\pi}  \sum_{n\geq2}\frac{\Lambda(n)}{\sqrt{n}} \left\{  \chi(n) \, \widehat{h}\!\left( \frac{\log n}{2\pi} \right) + \overline{\chi(n)}\,\widehat{h}\!\left( -\frac{\log n}{2\pi} \right)  \right\},
\end{split}
\end{align}
where the sum on the left-hand side runs over the non-trivial zeros $\rho_{\chi}$ of $L(s,\chi)$, and $\Lambda(n)$ is the von Mangoldt function defined to be $\log p$ if $n=p^k$, $p$ a prime and $k\ge 1$, and zero otherwise. 
\end{lemma}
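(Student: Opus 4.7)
The plan is to prove \eqref{20230323_09:47} by the classical recipe: contour-integrate $h\!\left(\frac{s-1/2}{i}\right)$ against the logarithmic derivative of the completed $L$-function, apply the functional equation, and expand via the Dirichlet series. Set
\begin{equation*}
\xi(s,\chi) := (q/\pi)^{(s+a)/2}\,\Gamma\!\left(\frac{s+a}{2}\right) L(s,\chi), \qquad a := \frac{1-\chi(-1)}{2}\in\{0,1\},
\end{equation*}
so that $\xi(\cdot,\chi)$ is entire of order one with zeros exactly at the nontrivial $\rho_\chi$, and $\xi(s,\chi) = \varepsilon_\chi\, \xi(1-s,\bar\chi)$ with $|\varepsilon_\chi|=1$. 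Logarithmically differentiating gives the three-piece decomposition
\begin{equation*}
\frac{\xi'}{\xi}(s,\chi) = \tfrac12 \log(q/\pi) + \tfrac12\,\frac{\Gamma'}{\Gamma}\!\left(\frac{s+a}{2}\right) + \frac{L'}{L}(s,\chi).
\end{equation*}

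First I would fix $c>1$ and apply the residue theorem to $h\!\left(\tfrac{s-1/2}{i}\right)\cdot(\xi'/\xi)(s,\chi)$ on the rectangle $[1-c,c]\times[-T,T]$. Letting $T\to\infty$ along a sequence avoiding the ordinates of zeros, the horizontal pieces vanish by the hypothesised decay of $h$ and the standard bound $(\xi'/\xi)(s,\chi) = O(\log(q(|t|+2)))$ on such lines, yielding
\begin{equation*}
\sum_{\rho_\chi} h\!\left(\tfrac{\rho_\chi-1/2}{i}\right) = \frac{1}{2\pi i}\left(\int_{(c)}-\int_{(1-c)}\right) h\!\left(\tfrac{s-1/2}{i}\right)\frac{\xi'}{\xi}(s,\chi)\,\mathrm d s.
\end{equation*}
Next, the change of variables $s\mapsto 1-s$ in the $(1-c)$ integral, combined with the identity $(\xi'/\xi)(1-s,\chi) = -(\xi'/\xi)(s,\bar\chi)$ coming from the functional equation, converts it into $\frac{1}{2\pi i}\int_{(c)} h\!\left(-\tfrac{s-1/2}{i}\right)(\xi'/\xi)(s,\bar\chi)\,\mathrm d s$, so the left-hand side equals the sum of two integrals over $(c)$, one with $\chi$ and one with $\bar\chi$.

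Then I would substitute the three-piece decomposition of $\xi'/\xi$ into both integrals and treat the pieces separately. Since $\log(q/\pi)$ is constant and $\tfrac{\Gamma'}{\Gamma}((s+a)/2)$ is analytic for $\mathrm{Re}(s)>-a$, I can shift both contours to the critical line $\mathrm{Re}(s)=1/2$. Parametrising $s=1/2+iu$ and combining the $\chi$ and $\bar\chi$ integrals via $u\mapsto -u$ (using $\bar\chi(-1)=\chi(-1)$), the constant piece contributes $\widehat h(0)\log(q/\pi)/(2\pi)$, and the digamma piece yields the advertised real-part integral: the sum of the two digamma values at conjugate arguments becomes twice the real part of one, and the identification $(1+2a)/4=(2-\chi(-1))/4$ gives the correct shift. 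For the $L'/L$ piece I would expand $-L'/L(s,\chi) = \sum_{n\ge 2}\chi(n)\Lambda(n)n^{-s}$, valid on $(c)$, exchange sum and integral, shift each $n$-th term back to the critical line, and recognise the resulting Fourier-type integral as $(2\pi)^{-1}n^{-1/2}\widehat h(\pm\log n/(2\pi))$ via the normalisation fixed in the introduction; summing the $\chi$ and $\bar\chi$ contributions reproduces the prime-power sum in \eqref{20230323_09:47}.

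The main technical obstacle is the bookkeeping of the truncation step: one must choose (or average over) the height $T$ so that the horizontal segments stay uniformly away from the ordinates of the zeros of $L(s,\chi)$, and justify interchange of sum and integral in the $L'/L$ piece after a contour shift. Both are standard but not automatic. Everything else is forced by the functional equation and by the Fourier conventions set in the introduction, and the argument closely follows \cite[Theorem 12]{IK} and \cite[Lemma 5]{CarFinder}.
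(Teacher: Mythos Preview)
Your proposal is correct and follows the standard contour-shift derivation of the Guinand--Weil formula. The paper does not give its own proof of this lemma but simply defers to \cite[Theorem~12]{IK} and \cite[Lemma~5]{CarFinder}; your outline is precisely the argument found in those references, so there is nothing to compare.
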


Our proofs of Theorems \ref{Thm_LPQR}, \ref{Thm4}, and \ref{Thm3} rely on a suitable application of Lemma \ref{GW_lemma}. We now state a few lemmas which will constitute the building blocks of the proofs of these theorems, to be assembled in the next sections.

\smallskip

\begin{lemma}\label{lemSumOverZeros}
    Assume GRH. Let $\chi$ be a primitive Dirichlet character modulo $q$ and let $h \in C^1(\R)$ be fixed such that $|h(x)|+ |h'(x)| \ll (1+|x|)^{-(1+\delta)}$ for some $\delta>0$ and for all $x \in \R$. Then
    \begin{align}
        \left|\sum_{\rho_{\chi}}  h\!\left(\frac{\rho_{\chi}-\tfrac12}{i}\right) \right| \leq \frac{\log q}{2 \pi} \, \| h\|_1 + O_h\!\left(\frac{\log q}{\log \log q}\right).
    \end{align}
\end{lemma}

\begin{proof}
For $T>0$, let $N(T, \chi)$ be the number of zeros $\rho_{\chi} = \beta_{\chi} + i \gamma_{\chi}$ of $L(s, \chi)$ with $0< \beta_{\chi} <1$ and $0 \leq \gamma_{\chi} \leq T$ (any zeros with $\gamma_{\chi} =0$ or $\gamma_{\chi} = T$ should be counted with multiplicity $\tfrac12$). Letting 
$$S(T,  \chi)= \frac{1}{\pi} \,{\rm arg} \,L\big(\tfrac12 + iT,  \chi\big),$$
where the argument is defined by continuous variation from $+\infty+iT$, we have the unconditional identity \cite[Corollary 14.6]{MV_Book} 
\begin{equation}
\label{20230327_13:02}
N(T,  \chi) = \frac{T}{2\pi} \log\frac{q\, T}{2\pi} - \frac{T}{2\pi} + S(T,  \chi) - S(0,  \chi) - \frac{ \chi(-1)}{8} + O\!\left(\frac{1}{T\!+\!1}\right).
\end{equation}
Note also that if $\rho_{\chi} = \beta_{\chi} + i \gamma_{\chi}$ is a zero of $L(s,  \chi)$ with $-T \leq \gamma \leq 0$, then $\overline{\rho_{\chi}} = \beta_{\chi} - i \gamma_{\chi}$ is a zero of $L(s, \overline{ \chi})$. 

\smallskip

Assuming GRH, Selberg \cite{Selberg} proved that
\begin{align}
\label{20230327_13:04}
|S(T,  \chi)| \ll  \frac{\log \big(q(T\!+\!3)\big)}{\log \log \big(q(T\!+\!3)\big)} \leq \frac{\log q}{\log \log q} + \log (T\!+\!3).
\end{align}
Note that the implicit constants in the error terms of \eqref{20230327_13:02} and \eqref{20230327_13:04} are absolute. Under GRH, we may write the non-trivial zeros as $\rho_\chi = \tfrac12 + i \gamma_\chi$, so that using summation by parts with \eqref{20230327_13:02} and \eqref{20230327_13:04}, we have
\begin{align}
\label{20230412_22:28}
\begin{split}
  \left|\sum_{\rho_{\chi}}  h\!\left(\frac{\rho_{\chi}-\tfrac12}{i}\right) \right| = \left|\sum_{\gamma_{\chi}} h\!\left(\gamma_{\chi}\right) \right| & \leq \sum_{\gamma_{\chi}} \big|h\!\left(\gamma_{\chi}\right)\big|  =\int_{0^-}^{\infty} |h(t)| \, \d N(t, \chi) +  \int_{0^-}^{\infty} |h(-t)| \, \d N(t, \overline{\chi})\\
& =   \frac{\log q}{2\pi} \,  \|h\|_1 + O_h\!\left( \frac{\log q}{\log \log q}\right).
\end{split}
\end{align}    
This proves the lemma.
\end{proof}

Next, we need a way to estimate sums over primes.
\begin{lemma}\label{lemSumOverPrimes}
    Let $g \in C_c^1(\R)$ and $m \in \N$ be such that $\supp \, g \subset [-\frac{\log m}{\pi},\frac{\log m}{\pi} ]$. Then
    \begin{itemize}
        \item [(i)]$\displaystyle
        \frac{1}{2\pi}  \sum_{2 \leq n < m}\frac{\Lambda(n)}{\sqrt{n}}   \, g\!\left( \frac{\log n}{2\pi} \right)  =   \int_0^{\frac{\log m}{2 \pi}} g(t) \, e^{\pi t} \,\d t + O\!\left((\| g\|_{1} + \| g'\|_{1}) \log^2 m\right);$
        \smallskip
        \item [(ii)] $\displaystyle
        \frac{1}{2\pi}  \sum_{n \geq m}\frac{\Lambda(n)}{\sqrt{n}}   \, g \!\left( \frac{\log n}{2\pi} \right)  =   \int_{\frac{\log m}{2 \pi}}^\infty g(t) \, e^{\pi t} \,\d t + O\!\left((\| g\|_{1} + \| g'\|_{1})\log^2 m\right)$.
    \end{itemize}
\end{lemma}

\begin{proof}
Recall that, under RH, we have \cite[Theorem 13.1]{MV_Book}
\begin{align}
\label{20230327_15:44}
\psi(x) :=\sum_{n\leq x} \Lambda(n) = x + O\big(\sqrt{x}\,\log^2x\big).
\end{align} 
Summation by parts yields
\begin{align}
\begin{split}
\sum_{2 \leq n < m}\frac{\Lambda(n)}{\sqrt{n}}   \, g\!\left( \frac{\log n}{2\pi} \right)  & =  \int_2^{m} g\!\left( \frac{\log x}{2\pi} \right) \frac{\d x}{\sqrt{x}} + O\!\left((\| g\|_{1} + \| g'\|_{1})\log^2 m\right) \\
& = 2 \pi \int_0^{\frac{\log m}{2 \pi}} g(t) \, e^{\pi t} \,\d t + O\!\left( (\| g\|_{1} + \| g'\|_{1}) \log^2 m\right),
\end{split}
\end{align}
yielding item (i). The same procedure yields item (ii).
\end{proof}

Often, we will have to work with sums involving non-primitive characters, which are not covered directly by Lemma \ref{GW_lemma}. The following result will be useful to treat this situation.

\begin{lemma}\label{lemNonPrimChar}
Let $\chi$ be a Dirichlet character modulo $q$ and let $\chi^*$ denote the unique
primitive Dirichlet character that induces $\chi$. If $g \in L^\infty(\R)$, then
\begin{align*}
&\left| \sum_{n\geq2} \frac{\Lambda(n)}{\sqrt{n}} \left\{  \left(\chi^*(n) - \chi(n)\right)\, g\!\left( \frac{\log n}{2\pi} \right) + \overline{\big(\chi^*(n) - \chi(n)\big)}\,g\!\left( -\frac{\log n}{2\pi} \right)  \right\}\right|   = O\left( \|g\|_\infty \sqrt{\log q}\right). 
\end{align*}
\end{lemma}
\begin{proof}
    By the definition of $\chi^*$ and the triangle inequality,
    \begin{align*}
        \Bigg| \sum_{n\geq2} \frac{\Lambda(n)}{\sqrt{n}} \bigg\{  \left(\chi^*(n) - \chi(n)\right)\, g\!\left( \frac{\log n}{2\pi} \right) &+ \overline{\big(\chi^*(n) - \chi(n)\big)}\,g\!\left( -\frac{\log n}{2\pi} \right)  \bigg\}\Bigg| \leq 2 \sum_{p|q}\sum_{k\geq1} \frac{\log p}{p^{k/2}}  \, \left|g\!\left( \frac{\log p^k}{2\pi} \right)\right| \\
        &\ll_g  \sum_{p |q} \frac{\log p}{\sqrt{p}} + \sum_{p|q} \frac{\log p}{p}  + \sum_{p|q}\sum_{k\geq 3 }\frac{\log p}{p^{k/2}} \\
        &\leq 2 \sum_{p |q} \frac{\log p}{\sqrt{p}}+  \sum_{p}\sum_{k\geq 3 }\frac{\log p}{p^{k/2}} \\
        &\ll \sqrt{\log q},
    \end{align*}
    where the last estimate comes from the bounds 
    \begin{equation}
    \label{20230325_16:04}
    \sum_p\sum_{k\geq 3 }\frac{\log p}{p^{k/2}} \ll 1 \quad \text{ and } \quad \sum_{p|q} \frac{\log p}{p} \leq \sum_{p |q} \frac{\log p}{\sqrt{p}} \ll \sqrt{\log q}.
    \end{equation}
    For the final estimate in \eqref{20230325_16:04}, we split the sum at height $\log q$ and note that 
    $$\sum_{p\leq \log q} \frac{\log p}{\sqrt{p}} \ll \sqrt{\log q} \ \ \ \ {\rm and} \ \ \ \sum_{\substack{p|q \\ \log q \leq p}} \frac{\log p}{\sqrt{p}} \leq \frac{1}{\sqrt{\log q}} \sum_{p|q} \log p \leq \sqrt{\log q},$$
  completing the proof of the lemma.   
\end{proof}

\section{Character non-residues: proof of Theorem \ref{Thm3}}\label{Sec3_CNR}

\subsection{Setup} Now let $\chi$ be a non-principal Dirichlet character modulo $q$ of order $\ell$, and let $n_{\chi}$ be the least character non-residue as defined in \eqref{20230322_17:52}. We also let $\chi^*$ denote the unique primitive Dirichlet character that induces $\chi$.
Throughout the proof below we set 
$$\Delta:=\log n_{\chi}/(2\pi),$$ 
and hence $n_{\chi} = e^{2\pi \Delta}$. From the work of Lamzouri, Li, and Soundararajan \cite{LLS}, one has $n_{\chi} \leq \big(0.794 + o(1)\big)\log^2q$, and hence we may assume without loss of generality that $q$ is large and that $n_{\chi} \leq \log^2q$.

\smallskip

Let $F \in \mc{A}$ with $\widehat{F} \in C^{\infty}_c(\R)$ be fixed, say with $\supp\big( \widehat{F}\big) \subset [-N, N]$. Let
\begin{equation}\label{20230323_09:59}
\widehat{h}(t) := \frac{\widehat{F}(t - \Delta) + \widehat{F}(-t - \Delta)}{2}
\end{equation}
and assume, without loss of generality, that $\Delta \geq N$. Since $\widehat h \in C_c^\infty (\R)$, the Paley-Wiener theorem \cite[Theorem IX.11]{RS} tells us that $h$ extends to an analytic function of $\C$, which in the strip $|\mathrm{Im} \, s| \le 1$ has the decay  $|h(s)| + |h'(s)|\ll (1+|s|)^{-(1+\delta)}$ for some $\delta>0$ when $|\mathrm{Re} \, s| \to \infty$. For each $j = 1,2,\ldots, \ell -1$ we thus apply the formula \eqref{20230323_09:47} for the primitive character $(\chi^j)^*$ and the test function $h$ given by \eqref{20230323_09:59}. Assuming GRH for Dirichlet $L$-functions, we may write the non-trivial zeros as $\rho_{(\chi^j)^*} = \tfrac12 + i \gamma_{(\chi^j)^*}$, with $\gamma_{(\chi^j)^*} \in \R$. Adding and subtracting the sum over primes with $\chi^j$, and rearranging terms we get (below we let $q_j$ be the modulus of $(\chi^j)^*$)
\begin{align}\label{20230323_10:21}
- & \sum_{\gamma_{(\chi^j)^*}} h\!\left(\gamma_{(\chi^j)^*}\right) + \widehat{h}(0)  \frac{\log (q_j/\pi)}{2\pi}   +  \frac{1}{2\pi}  \int_{-\infty}^{\infty} h(u) \, \mathrm{Re} \,  \frac{\Gamma'}{\Gamma}\left(\frac{2 - (\chi^j)^*(-1)}{4} + \frac{iu}{2}\right)  \mathrm{d}u  \nonumber \\
& \ \ \ \ \  =  
\frac{1}{2\pi}  \sum_{n\geq2} \frac{\Lambda(n)}{\sqrt{n}} \left\{  \chi^j(n) \, \widehat{h}\!\left( \frac{\log n}{2\pi} \right) + \overline{\chi^j(n)}\,\widehat{h}\!\left( -\frac{\log n}{2\pi} \right)  \right\}\\
&  \ \ \ \ \  \ \ \ \ \ + \frac{1}{2\pi}  \sum_{n\geq2} \frac{\Lambda(n)}{\sqrt{n}} \left\{  \left((\chi^j)^*(n) - \chi^j(n)\right)\, \widehat{h}\!\left( \frac{\log n}{2\pi} \right) + \overline{\big((\chi^j)^*(n) - \chi^j(n)\big)}\,\widehat{h}\!\left( -\frac{\log n}{2\pi} \right)  \right\}. \nonumber
\end{align}
For each $j$, let us call the left-hand side of \eqref{20230323_10:21} by ${\rm (LHS)}_j$ and the right-hand side of \eqref{20230323_10:21} by ${\rm (RHS)}_j$. The idea is to sum \eqref{20230323_10:21} over $j = 1,2,\ldots, \ell -1$  and proceed with an asymptotic analysis as $q \to \infty$.

\subsection{Asymptotic analysis} We analyze the right-hand and the left-hand sides of \eqref{20230323_10:21} separately.

\subsubsection{Analysis of \,$\sum_{j=1}^{\ell-1}{\rm (RHS)}_j$} \label{RHS_THM1} Since $\widehat{F}$ is real-valued, note that $\widehat{h}$ is real-valued and even. Throughout the rest of the proof below, $p$ denotes a prime number. First note that, for each $j$, by applying Lemma \ref{lemNonPrimChar} to $\chi^j$ and $\widehat{h}$, we have
\begin{align}
& \left|\frac{1}{2\pi}  \sum_{n\geq2} \frac{\Lambda(n)}{\sqrt{n}} \left\{  \left((\chi^j)^*(n) - \chi^j(n)\right)\, \widehat{h}\!\left( \frac{\log n}{2\pi} \right) + \overline{\big((\chi^j)^*(n) - \chi^j(n)\big)}\,\widehat{h}\!\left( -\frac{\log n}{2\pi} \right)  \right\}\right|   = O_F\left( \sqrt{\log q}\right). \label{20230412_21:33}
\end{align}
Recall the orthogonality relation
\begin{align*}
\sum_{j = 1}^{\ell -1} \chi^j(n) = 
\begin{cases}
\ell -1\,, & {\rm if} \ \ \chi(n) = 1;\\
-1\,, &{\rm if} \ \ \chi(n) \neq 0, 1.
\end{cases}
\end{align*}
Letting $\chi_0$ be the principal character modulo $q$, and recalling the definition of $n_{\chi}$ and the fact that $\widehat{h}$ is real-valued and even, we have
\begin{align}
& \sum_{j = 1}^{\ell -1} \frac{1}{2\pi}  \sum_{n\geq2} \frac{\Lambda(n)}{\sqrt{n}} \left\{  \chi^j(n) \, \widehat{h}\!\left( \frac{\log n}{2\pi} \right) + \overline{\chi^j(n)}\,\widehat{h}\!\left( -\frac{\log n}{2\pi} \right)  \right\} \nonumber \\
& = \frac{(\ell \!-\!1)}{\pi} \sum_{\substack{n\geq 2 \\ \chi(n)=1}}\frac{\Lambda(n)}{\sqrt{n}}  \, \widehat{h}\!\left( \frac{\log n}{2\pi} \right) - \frac{1}{\pi} \sum_{\substack{n\geq 2 \\ \chi(n)\neq 0,1}}\frac{\Lambda(n)}{\sqrt{n}}  \, \widehat{h}\!\left( \frac{\log n}{2\pi} \right) \label{20230412_21:37}\\
& = \frac{(\ell \!-\!1)}{\pi} \!\!\sum_{2 \leq n < n_{\chi}}\!\!\!\frac{\Lambda(n)\chi_0(n)}{\sqrt{n}}  \, \widehat{h}\!\left( \frac{\log n}{2\pi} \right)  +  \frac{\ell}{\pi} \sum_{\substack{n\geq n_{\chi} \\ \chi(n)=1}}\frac{\Lambda(n)}{\sqrt{n}}  \, \widehat{h}\!\left( \frac{\log n}{2\pi} \right)- \frac{1}{\pi} \sum_{n\geq n_{\chi}}\frac{\Lambda(n)\chi_0(n)}{\sqrt{n}}  \, \widehat{h}\!\left( \frac{\log n}{2\pi} \right) \nonumber \\
& = \frac{(\ell \!-\!1)}{\pi} \!\! \sum_{2 \leq n < n_{\chi}}\!\!\! \frac{\Lambda(n)}{\sqrt{n}}  \, \widehat{h}\!\left( \frac{\log n}{2\pi} \right)  +  \frac{\ell}{\pi} \sum_{\substack{n\geq n_{\chi} \\ \chi(n)=1}}\frac{\Lambda(n)}{\sqrt{n}}  \, \widehat{h}\!\left( \frac{\log n}{2\pi} \right)- \frac{1}{\pi} \sum_{n\geq n_{\chi}}\frac{\Lambda(n)}{\sqrt{n}}  \, \widehat{h}\!\left( \frac{\log n}{2\pi} \right) + O_F\!\left( \ell \sqrt{\log q}\right) \nonumber \\
& \geq \frac{(\ell \!-\!1)}{\pi}\!\! \sum_{2 \leq n < n_{\chi}}\!\!\! \frac{\Lambda(n)}{\sqrt{n}}  \, \widehat{h}\!\left( \frac{\log n}{2\pi} \right)  -  \frac{\ell}{\pi} \sum_{n\geq n_{\chi} }\frac{\Lambda(n)}{\sqrt{n}}  \, \widehat{h}_-\!\left( \frac{\log n}{2\pi} \right)- \frac{1}{\pi} \sum_{n\geq n_{\chi}}\frac{\Lambda(n)}{\sqrt{n}}  \, \widehat{h}\!\left( \frac{\log n}{2\pi} \right) + O_F\!\left( \ell \sqrt{\log q}\right). \nonumber 
\end{align}
The removal of the principal character $\chi_0$ in the fourth line above at the expense of a small error term is justified as in \eqref{20230325_16:04}. We now analyze the three sums over primes in the last line of \eqref{20230412_21:37}. Applying Lemma \ref{lemSumOverPrimes}(i) to $g = \widehat{h}$ and $m = n_\chi$, we use the known bound $n_\chi \ll \log^2 q$ to obtain 
\begin{align}\label{20230325_17:50}
\begin{split}
\frac{1}{\pi}  \sum_{2 \leq n < n_{\chi}}\frac{\Lambda(n)}{\sqrt{n}}   \, \widehat{h}\!\left( \frac{\log n}{2\pi} \right)  
 &= 2  \int_0^{\Delta} \widehat{h}(t) \, e^{\pi t} \,\d t + O_F\!\left((\log \log q)^2\right) \,\\
& = \int_0^{\Delta} \left(\widehat{F}(t - \Delta) + \widehat{F}(-t - \Delta)\right)  \, e^{\pi t} \,\d t  + O_F\!\left((\log \log q)^2\right) \\
&= \int_0^{\Delta}\widehat{F}(t - \Delta)\, e^{\pi t} \,\d t  + O_F\!\left((\log \log q)^2\right)
\\& = e^{\pi \Delta} \int_{-\infty}^{0}\widehat{F}(y) \, e^{\pi y} \,\d y\, + O_F\!\left((\log \log q)^2\right),
\end{split}
\end{align}
from the assumptions that $\supp\big( \widehat{F}\big) \subset [-N, N]$ and $\Delta \geq N$. Now applying Lemma \ref{lemSumOverPrimes}(ii), we similarly get 
\begin{align}\label{20230325_17:52}
\frac{1}{\pi}  \sum_{n\geq n_{\chi} }\frac{\Lambda(n)}{\sqrt{n}}  \, \widehat{h}_-\!\left( \frac{\log n}{2\pi} \right) = e^{\pi \Delta} \int_{0}^{\infty}\widehat{F}_{-}(y) \, e^{\pi y} \,\d y + O_F\!\left((\log \log q)^2\right) \,,
\end{align}
and
\begin{align}\label{20230412_21:46}
\frac{1}{\pi}  \sum_{n\geq n_{\chi} }\frac{\Lambda(n)}{\sqrt{n}}  \, \widehat{h}\!\left( \frac{\log n}{2\pi} \right) = e^{\pi \Delta} \int_{0}^{\infty}\widehat{F}(y) \, e^{\pi y} \,\d y + O_F\!\left((\log \log q)^2\right).
\end{align}

\smallskip

Plugging \eqref{20230325_17:50}, \eqref{20230325_17:52}, and \eqref{20230412_21:46} back into \eqref{20230412_21:37}, and recalling \eqref{20230412_21:33}, we arrive at 
\begin{align}\label{20230412_22:32}
\begin{split}
& \sum_{j=1}^{\ell-1}{\rm (RHS)}_j  \geq e^{\pi \Delta}\left( (\ell \! - \! 1) \int_{-\infty}^{0}\widehat{F}(y) \, e^{\pi y} \,\d y -  \int_{0}^{\infty}\big(\ell \,\widehat{F}_{-}(y) + \widehat{F}(y)\big) \, e^{\pi y} \,\d y\right) + O_F\!\left( \ell\sqrt{\log q}\right)\\
&= e^{\pi \Delta}(\ell \!- \!1) \left(   \int_{-\infty}^{0}\widehat{F}(y) \, e^{\pi y} \,\d y -   \int_{0}^{\infty}\widehat{F}_-(y)\,  e^{\pi y} \,\d y - \frac{1}{(\ell\!-\!1)} \int_{0}^{\infty}\widehat{F}_+(y) \, e^{\pi y} \,\d y \right)+ O_F\!\left( \ell\sqrt{\log q}\right).
\end{split}
\end{align}

\subsubsection{Analysis of \,$\sum_{j=1}^{\ell-1}{\rm (LHS)}_j$} \label{LHS_THM_SUB} Start by noting that $\widehat{h}(0) = 0$, since $\supp\big( \widehat{F}\big) \subset [-N, N]$ and $\Delta \geq N$. Note also that 
\begin{equation}\label{20230327_10:58}
h(u) = \tfrac12\big(e^{2\pi i u \Delta} F(u) + e^{-2 \pi i u \Delta}F(-u)\big). 
\end{equation} 
Using Stirling's formula for $\Gamma'/\Gamma$, we get 
\begin{align}\label{20230327_13:19}
\left|\int_{-\infty}^{\infty} h(u) \, \mathrm{Re} \,  \frac{\Gamma'}{\Gamma}\!\left(\frac{2 - (\chi^j)^*(-1)}{4} + \frac{iu}{2}\right)  \mathrm{d}u\right| \ll\int_{-\infty}^{\infty} |h(u)| \,\log(2 + |u|)\,\d u = O_F(1).
\end{align}
It remains to analyze the sum over the zeros in the left hand-sides of \eqref{20230323_10:21}, and we are interested in an upper bound for it. This is provided by Lemma \ref{lemSumOverZeros}, which we apply to $|F|$ and $(\chi^j)^*$ to get 
\begin{align}
\label{20250710_11:39}
\begin{split}
\left|\sum_{\gamma_{(\chi^j)^*}} h\!\left(\gamma_{(\chi^j)^*}\right) \right| &\leq \sum_{\gamma_{(\chi^j)^*}} \left|F\!\left(\gamma_{(\chi^j)^*}\right)\right|\\
&\leq \frac{\log q_j}{2\pi} \,  \|F\|_1+ O_F\!\left( \frac{\log q_j}{\log \log q_j}\right) \\
&\leq \frac{\log q}{2\pi} \,  \|F\|_1 + O_F\!\left( \frac{\log q}{\log \log q}\right).
\end{split}
\end{align}
From \eqref{20230327_13:19} and \eqref{20250710_11:39}, we arrive at 
\begin{equation}\label{20230327_13:22}
\sum_{j=1}^{\ell-1}{\rm (LHS)}_j \leq \frac{(\ell\! -\!1)\log q}{2\pi}  \, \|F\|_1 + O_F\!\left(  \frac{\ell\, \log q}{\log \log q}\right).
\end{equation}

\subsection{Conclusion} \label{conc} Summing \eqref{20230323_10:21} over $j = 1,2,\ldots, \ell -1$, and using \eqref{20230412_22:32} and \eqref{20230327_13:22} we get 
\begin{align*}
e^{\pi \Delta} \left(   \int_{-\infty}^{0}\widehat{F}(y) \, e^{\pi y} \,\d y -   \int_{0}^{\infty}\widehat{F}_-(y)  \,e^{\pi y} \,\d y - \frac{1}{(\ell\!-\!1)} \int_{0}^{\infty}\widehat{F}_+(y)  \,e^{\pi y} \,\d y \right) \leq \frac{\log q}{2\pi} \, \|F\|_1 + O_F\!\left( \frac{\log q}{\log \log q}\right).
\end{align*}
Recalling that $n_{\chi} = e^{2\pi \Delta}$, this yields
\begin{align}\label{20230327_13:31}
\limsup_{q \to \infty}\frac{\sqrt{n_{\chi}}}{\log q} \leq \frac{1}{2\pi} \frac{\|F\|_1}{ \left(   \int_{-\infty}^{0}\widehat{F}(y) \, e^{\pi y} \,\d y -   \int_{0}^{\infty}\widehat{F}_-(y) \, e^{\pi y} \,\d y - \frac{1}{(\ell-1)} \int_{0}^{\infty}\widehat{F}_+(y) \, e^{\pi y} \,\d y \right)}\,,
\end{align}
where we assume that the denominator on the right-hand side of \eqref{20230327_13:31} is positive. At this stage we can take the infimum of the right-hand side of \eqref{20230327_13:31} over $F \in \mc{A}$ with $\widehat{F} \in C^{\infty}_c(\R)$ and, by Theorem \ref{Thm1} (i), such an infimum is indeed $\mc{C} \!\left(  \frac{1}{(\ell-1)}\right)^{-1}$. This concludes the proof of Theorem \ref{Thm3}.

\subsection{Proof of Theorem \ref{Thm_LPQR}} We now indicate how to appropriately modify the ideas in the proof of Theorem \ref{Thm3} in order to prove Theorem \ref{Thm_LPQR}. Suppose that $p$ is prime and that $\chi$ is the Legendre symbol modulo $p$. Then $\chi$ is primitive and, assuming GRH, Lemma \ref{GW_lemma} yields
\[
\begin{split}
- \frac{1}{2\pi}  \sum_{n\geq2}&\frac{\Lambda(n) \chi(n)}{\sqrt{n}} \left\{  \widehat{h}\!\left( \frac{\log n}{2\pi} \right) + \widehat{h}\!\left( -\frac{\log n}{2\pi} \right)  \right\} 
\\
&\qquad= \sum_{\rho_{\chi}} h( \gamma_\chi) - \widehat{h}(0) \frac{\log (p/\pi)}{2\pi}   -  \frac{1}{2\pi}  \int_{-\infty}^{\infty} h(u) \, \mathrm{Re} \,  \frac{\Gamma'}{\Gamma}\!\left(\frac{2 - \chi(-1)}{4} + \frac{iu}{2}\right)  \mathrm{d}u.
\end{split}
\]
Now set $\Delta:=\log r_{p}/(2\pi)$ so that $r_{p} = e^{2\pi \Delta}$, and let $\widehat{h}(t)$ be defined as in \eqref{20230323_09:59}. As before, since $\widehat h \in C_c^\infty (\R)$, the Paley-Wiener theorem \cite[Theorem IX.11]{RS} tells us that $h$ extends to an analytic function of $\C$, which in the strip $|\mathrm{Im} \, s| \le 1$ has the decay $|h(s)| + |h'(s)| \ll (1+|s|)^{-(1+\delta)}$ for some $\delta>0$ when $|\mathrm{Re} \, s| \to \infty$. So that we can indeed apply Lemma \ref{GW_lemma} to this choice of $h$.

With these choices, we still have $\widehat{h}(0)=0$, the bound in \eqref{20230327_13:19} still holds, and Lemma \ref{lemSumOverZeros} gives
\[
\left|\sum_{\gamma_{\chi}} h\!\left(\gamma_{\chi}\right) \right| \leq \sum_{\gamma_{\chi}} \left|F\!\left(\gamma_{\chi}\right)\right|\leq \frac{\log p}{2\pi}  \|F\|_1 + O_F\!\left( \frac{\log p}{\log \log p}\right).
\]
For each prime $\ell < r_p$, we have $\chi(\ell) =-1$, hence applying Lemma \ref{lemSumOverPrimes} and using Ankeny's result that $r_p =O(\log^2p)$ assuming GRH, we deduce that 
\[
\begin{split}
- \frac{1}{2\pi}  \sum_{n\geq2}  \frac{\Lambda(n) \chi(n)}{\sqrt{n}} & \left\{  \widehat{h}\!\left( \frac{\log n}{2\pi} \right) + \widehat{h}\!\left( -\frac{\log n}{2\pi} \right)  \right\}  
\\
&= - \frac{1}{\pi}  \sum_{n\geq2}\frac{\Lambda(n) \chi(n)}{\sqrt{n}}  \widehat{h}\!\left( \frac{\log n}{2\pi} \right) 
\\
&\ge \frac{1}{\pi} \sum_{n < r_p} \frac{\Lambda(n)}{\sqrt{n}}  \widehat{h}\!\left( \frac{\log n}{2\pi} \right) - \frac{1}{\pi} \sum_{n \ge r_p} \frac{\Lambda(n)}{\sqrt{n}}  \left|\widehat{h}\!\left( \frac{\log n}{2\pi} \right)\right| 
+ O_F( \log \log p)
\\
&=2  \int_0^{\Delta} \widehat{h}(t) \, e^{\pi t} \,\d t - 2  \int_\Delta^{\infty} \left|\widehat{h}(t)\right| \, e^{\pi t} \,\d t+O_F\!\left((\log \log p)^2\right) 
\\
&= e^{\pi \Delta} \int_{-\infty}^{0} \widehat{F}(t)\,e^{\pi t} \,\d t - e^{\pi \Delta}  \int_{0}^{\infty} \big|\widehat{F}(t)\big|\,e^{\pi t} \,\d t 
+ O_F\big( ( \log \log p)^2\big).
\end{split}
\]
Combining estimates, we derive that
\begin{align*}
e^{\pi \Delta} \left(   \int_{-\infty}^{0}\widehat{F}(y) \, e^{\pi y} \,\d y -   \int_{0}^{\infty}|\widehat{F}(y)|  \,e^{\pi y} \,\d y\right) \leq \frac{\log p}{2\pi} \, \|F\|_1 + O_F\!\left( \frac{\log p}{\log \log p}\right).
\end{align*}
Rearranging and recalling that $r_{p} = e^{2\pi \Delta}$, we send $p \to \infty$ to obtain
\begin{align*}
\limsup_{p \to \infty}\frac{\sqrt{r_{p}}}{\log p} \leq \frac{1}{2\pi} \frac{\|F\|_1}{ \left(   \int_{-\infty}^{0}\widehat{F}(y) \, e^{\pi y} \,\d y -   \int_{0}^{\infty}|\widehat{F}(y)| \, e^{\pi y} \,\d y \right)}\,.
\end{align*}
We now take the infimum of the right-hand side above over $F \in \mc{A}$ with $\widehat{F} \in C^{\infty}_c(\R)$. This, in view of Theorem \ref{Thm1} (i), yields
\[
\limsup_{p \to \infty}\frac{\sqrt{r_{p}}}{\log p} \leq \frac{1}{\mathcal{C}(1)}.
\]
Theorem \ref{Thm_LPQR} now follows upon squaring both sides of this inequality.

\section{Primes in arithmetic progressions: proof of Theorem \ref{Thm4}} \label{Section_primes_AP}

\subsection{Setup} Throughout the proof below we set 
$$\Delta:=\log P(a,q)/(2\pi),$$ 
and hence $P(a,q) = e^{2\pi \Delta}$. We may assume without loss of generality that $q$ is large and that 
\begin{equation*}
\frac12 (\phi(q) \log q)^2\leq P(a,q) \leq(\phi(q) \log q)^2.
\end{equation*}
The upper bound is true under GRH, for all $q >3$, from the work of Lamzouri, Li and Soundararajan \cite{LLS} and, for the cases where the lower bound does not hold, our Theorem \ref{Thm4} is trivially true. In particular we have, as $q \to \infty$,
\begin{equation}\label{20230328_18:47}
\pi \Delta =  \log \phi(q) + O(\log \log q) = \big(1 + o(1)\big) \log q.
\end{equation}

As before, let $F \in \mc{A}$ with $\widehat{F} \in C^{\infty}_c(\R)$ be fixed, say with $\supp\big( \widehat{F}\big) \subset [-N, N]$, and define $\widehat{h}$ by \eqref{20230323_09:59}. Assume without loss of generality that $\Delta \geq N$. From the orthogonality relations between the Dirichlet characters modulo $q$ we have 
\begin{align}\label{20230327_14:44}
\begin{split}
\phi(q)\!\!\! \!\!\!\sum_{\substack{n\geq2 \\ n \equiv a\,  ({\rm mod} \, q)}} & \frac{\Lambda(n)}{\sqrt{n}}   \, \widehat{h}\!\left( \frac{\log n}{2\pi} \right) = \sum_{n\geq2} \frac{\Lambda(n)\chi_0(n)}{\sqrt{n}}   \, \widehat{h}\!\left( \frac{\log n}{2\pi} \right) + \sum_{\chi \neq \chi_0} \overline{\chi(a)}\sum_{n\geq2} \frac{\Lambda(n)\chi(n)}{\sqrt{n}}   \, \widehat{h}\!\left( \frac{\log n}{2\pi} \right)\,,
\end{split}
\end{align}
where $\chi_0$ is the principal character modulo $q$. The strategy  is to proceed with an asymptotic analysis of each of the three sums in \eqref{20230327_14:44}. 
Part of the required work for the two sums on the right-hand side of \eqref{20230327_14:44} parallels that of Section \ref{Sec3_CNR}.
For the sum on the left-hand side of \eqref{20230327_14:44}, the idea is to use the definition of $P(a,q)$ together with the Brun-Titchmarsh inequality to provide a suitable upper bound.

\subsection{Asymptotic analysis} 
\subsubsection{The sum with the principal character} We can reduce matters to the Riemann zeta-function:
\begin{align*}
\sum_{n\geq2} \frac{\Lambda(n)\chi_0(n)}{\sqrt{n}}   \, \widehat{h}\!\left( \frac{\log n}{2\pi} \right) & = \sum_{n\geq2} \frac{\Lambda(n)}{\sqrt{n}}   \, \widehat{h}\!\left( \frac{\log n}{2\pi} \right) - \sum_{p|q}\sum_{k\geq 1} \frac{\log p}{p^{k/2}}  \ \widehat{h}\!\left( \frac{\log p^k}{2\pi} \right) \nonumber \\
& = \sum_{n\geq2} \frac{\Lambda(n)}{\sqrt{n}}   \, \widehat{h}\!\left( \frac{\log n}{2\pi} \right) + O_F\left(\sqrt{\log q}\right),
\end{align*}
where the error term comes from the estimate
\begin{align*}
    \sum_{p|q}\sum_{k\geq 1} \frac{\log p}{p^{k/2}} &= \sum_{p |q} \frac{\log p}{\sqrt{p}} + \sum_{p|q} \frac{\log p}{p}  + \sum_{p|q}\sum_{k\geq 3 }\frac{\log p}{p^{k/2}} \\
        &\leq 2 \sum_{p |q} \frac{\log p}{\sqrt{p}}+  \sum_{p}\sum_{k\geq 3 }\frac{\log p}{p^{k/2}} \\
        &\ll \sqrt{\log q},
\end{align*}
obtained as in \eqref{20230325_16:04}.

By the bound $P(a,q) \leq (\phi(q) \log q)^2$ and items (i) and (ii) of Lemma \ref{lemSumOverPrimes}, we have
\begin{align*}
     \sum_{n\geq2} \frac{\Lambda(n)}{\sqrt{n}}   \, \widehat{h}\!\left( \frac{\log n}{2\pi} \right) =  2 \pi  \int_{0}^{\infty}\widehat{h}(y) \, e^{\pi y} \,\d y +  O_F\big(\log^2q\big) = \pi e^{\pi \Delta} \int_{-\infty}^{\infty}\widehat{F}(y) \, e^{\pi y} \,\d y + O_F\big(\log^2q\big).
\end{align*} 
In conclusion, we have shown that
\begin{equation*}\label{20230327_17:22}
\sum_{n\geq2} \frac{\Lambda(n)\chi_0(n)}{\sqrt{n}}   \, \widehat{h}\!\left( \frac{\log n}{2\pi} \right) = \pi e^{\pi \Delta} \int_{-\infty}^{\infty}\widehat{F}(y) \, e^{\pi y} \,\d y + O_F\big(\log^2q\big).
\end{equation*}

\subsubsection{The sum over the non-principal characters} For each $\chi \neq \chi_0$ let us define the function $h_{\chi}$ by 
$$\widehat{h_{\chi}}(t):=\frac{\overline{\chi(a)}\,\widehat{F}(t - \Delta) + \chi(a)\,\widehat{F}(-t - \Delta)}{2}.$$
Since $\supp\big( \widehat{F}\big) \subset [-N, N]$ and $\Delta \geq N$, note that $\widehat{h_{\chi}}(0) = 0$ and 
\begin{align*}
\widehat{h_{\chi}}(t) = 
\begin{cases}
\overline{\chi(a)} \, \widehat{h}(t), & {\rm for} \ t \geq 0;\\
\chi(a) \, \widehat{h}(t),& {\rm for} \ t \leq 0.
\end{cases}
\end{align*}
Since the sum is real-valued and $\widehat{h}$ is even, we have
\begin{align}\label{20230327_17:15}
\begin{split}
\sum_{\chi \neq \chi_0} & \overline{\chi(a)}\sum_{n\geq2} \frac{\Lambda(n)\chi(n)}{\sqrt{n}}   \, \widehat{h}\!\left( \frac{\log n}{2\pi} \right)   \\
&  = \frac12 \sum_{\chi \neq \chi_0} \sum_{n\geq2} \frac{\Lambda(n)}{\sqrt{n}} \left\{\chi(n)  \overline{\chi(a)} \, \widehat{h}\!\left( \frac{\log n}{2\pi} \right)+ \overline{\chi(n)}\chi(a)\, \widehat{h}\!\left( -\frac{\log n}{2\pi} \right)\right\}\\
& = \frac12 \sum_{\chi \neq \chi_0} \sum_{n\geq2} \frac{\Lambda(n)}{\sqrt{n}}\left\{\chi(n) \, \widehat{h_{\chi}}\!\left( \frac{\log n}{2\pi} \right) + \overline{\chi(n)}\, \widehat{h_{\chi}}\!\left(- \frac{\log n}{2\pi} \right)\right\}. 
\end{split}
\end{align}

If $\chi$ is a non-principal Dirichlet character modulo $q$, and $\chi^*$ modulo $q^*$ is the unique primitive Dirichlet character that induces $\chi$ (we include the possibility of $\chi^* = \chi$, if $\chi$ is primitive) we rewrite
\begin{align}\label{20230328_10:48}
\begin{split}
\sum_{n\geq2} & \frac{\Lambda(n)}{\sqrt{n}}\left\{\chi(n) \, \widehat{h_{\chi}}\!\left( \frac{\log n}{2\pi} \right) + \overline{\chi(n)}\, \widehat{h_{\chi}}\!\left(- \frac{\log n}{2\pi} \right)\right\} \\
& = \sum_{n\geq2} \frac{\Lambda(n)}{\sqrt{n}}\left\{\chi^*(n) \, \widehat{h_{\chi}}\!\left( \frac{\log n}{2\pi} \right) + \overline{\chi^*(n)}\, \widehat{h_{\chi}}\!\left(- \frac{\log n}{2\pi} \right)\right\} \\
& \ \ \ \ \ \ \ \ \ + \sum_{n\geq2} \frac{\Lambda(n)}{\sqrt{n}}\left\{\big(\chi(n) - \chi^*(n)\big) \, \widehat{h_{\chi}}\!\left( \frac{\log n}{2\pi} \right) + \big(\overline{\chi(n)} - \overline{\chi^*(n)}\big)\, \widehat{h_{\chi}}\!\left(- \frac{\log n}{2\pi} \right)\right\}\\
& = \sum_{n\geq2} \frac{\Lambda(n)}{\sqrt{n}}\left\{\chi^*(n) \, \widehat{h_{\chi}}\!\left( \frac{\log n}{2\pi} \right) + \overline{\chi^*(n)}\, \widehat{h_{\chi}}\!\left(- \frac{\log n}{2\pi} \right)\right\}  + O_F\!\left(\sqrt{\log q}\right)\,,
\end{split}
\end{align}
with the error term coming from an application of Lemma \ref{lemNonPrimChar}. Note that 
\begin{equation*}
h_{\chi}(u) = \frac12\Big(\overline{\chi(a)} e^{2\pi i u \Delta} F(u) + \chi(a)e^{-2 \pi i u \Delta}F(-u)\Big)
\end{equation*} 
is real-valued.  Note also that since $\widehat {h_\chi} \in C_c^\infty (\R)$, the Paley-Wiener theorem \cite[Theorem IX.11]{RS} tells us that $h_\chi$ extends to an analytic function of $\C$, which in the strip $|\mathrm{Im} \, s| \le 1$ has the decay  $|h_\chi(s)|+ |h_\chi'(s)| \ll (1+|s|)^{-(1+\delta)}$ for some $\delta>0$ when $|\mathrm{Re} \, s| \to \infty$. So now we use the explicit formula (Lemma \ref{GW_lemma}) for the primitive character $\chi^*$, and proceed by applying Stirling's formula for $\Gamma'/\Gamma$ and Lemma \ref{lemSumOverZeros} to get 
\begin{align}\label{20230327_17:16}
\begin{split}
& \left|\sum_{n\geq2} \frac{\Lambda(n)}{\sqrt{n}}\left\{\chi^*(n) \, \widehat{h_{\chi}}\!\left( \frac{\log n}{2\pi} \right) + \overline{\chi^*(n)}\, \widehat{h_{\chi}}\!\left(- \frac{\log n}{2\pi} \right)\right\}\right| \\
&  \ \ \ \ \ \ = \left| - 2\pi  \sum_{\gamma_{\chi^*}} h_{\chi}\!\left(\gamma_{\chi^*}\right) + \int_{-\infty}^{\infty} h_{\chi}(u) \, \mathrm{Re} \,  \frac{\Gamma'}{\Gamma}\left(\frac{2 - \chi^*(-1)}{4} + \frac{iu}{2}\right)  \mathrm{d}u \right|  \\
&\ \ \ \ \ \ \leq  2\pi  \sum_{\gamma_{\chi^*}} |F\!\left(\gamma_{\chi^*}\right)| + O\left( \int_{-\infty}^{\infty} |h_\chi(u)| \,\log(2 + |u|)\,\d u \right) \\
&\ \ \ \ \ \ \leq (\log q) \, \|F\|_1 +O_F\!\left( \frac{\log q}{\log \log q}\right).  
\end{split}
\end{align}
From \eqref{20230327_17:15}, \eqref{20230328_10:48}, \eqref{20230327_17:16}, and the triangle inequality, we arrive at 
\begin{align}\label{20230327_17:24}
\left|\sum_{\chi \neq \chi_0}  \overline{\chi(a)}\sum_{n\geq2} \frac{\Lambda(n)\chi(n)}{\sqrt{n}}   \, \widehat{h}\!\left( \frac{\log n}{2\pi} \right)\right|  \leq \frac{\phi(q)\log q}{2}\, \|F\|_1 + O_F\!\left( \frac{\phi(q)\log q}{\log \log q}\right).
\end{align}

\subsubsection{The sum over the arithmetic progression: an upper bound via the Brun-Titchmarsh inequality} We continue to reserve below the letter $p$ for a prime number. From the definition of $\widehat{h}$ and $P(a,q)$, we have
\begin{align}\label{20230328_17:03}
\begin{split}
\sum_{\substack{n\geq2 \\ n \equiv a\,  ({\rm mod} \, q)}}  \frac{\Lambda(n)}{\sqrt{n}}   \, \widehat{h}\!\left( \frac{\log n}{2\pi} \right) & = \frac12 \sum_{\substack{2 \leq n < e^{2\pi \cdot (\Delta +N)} \\ n \equiv a\,  ({\rm mod} \, q)}}  \frac{\Lambda(n)}{\sqrt{n}}   \, \widehat{F}\!\left( \frac{\log n}{2\pi} - \Delta\right)\\
& = \frac12 \sum_{\substack{e^{2\pi \Delta} \leq p < e^{2\pi \cdot(\Delta +N)} \\ p \equiv a\,  ({\rm mod} \, q)}}  \frac{\log p}{\sqrt{p}}   \, \widehat{F}\!\left( \frac{\log p}{2\pi} - \Delta\right) + O_F(\log q).
\end{split}
\end{align}
The error term above follows from the fact that 
\begin{equation*}
\sum_p\sum_{ k\geq 3 }\frac{\log p}{p^{k/2}} \ll 1 \quad \text{ and } \quad  \sum_{p \leq \sqrt{e^{2\pi \cdot(\Delta +N)}}} \frac{\log p}{p} \ll  \pi \Delta  \ll  \log q.
\end{equation*}
We now break the sum on the right-hand side of \eqref{20230328_17:03} into intervals $I_k$ of size $|I_k| = \frac{e^{2\pi \Delta} }{\Delta}$ by writing
\begin{align*}
\frac12 \sum_{\substack{e^{2\pi \Delta} \leq p < e^{2\pi \cdot(\Delta +N)} \\ p \equiv a\,  ({\rm mod} \, q)}}  \frac{\log p}{\sqrt{p}}   \, \widehat{F}\!\left( \frac{\log n}{2\pi} - \Delta\right) = \frac12 \sum_{k=0}^{M}\sum_{\substack{p \in I_k  \\ p \equiv a\,  ({\rm mod} \, q)}}  \frac{\log p}{\sqrt{p}}   \, \widehat{F}\!\left( \frac{\log p}{2\pi} - \Delta\right)\,,
\end{align*}
where $I_k :=\Big[ e^{2\pi \Delta} + k \frac{e^{2\pi \Delta} }{\Delta}\,,\, e^{2\pi \Delta} + (k+1) \frac{e^{2\pi \Delta} }{\Delta}\Big)$ and $M :=\lfloor\Delta(e^{2 \pi N} -1)\rfloor$.

\smallskip

Letting $\pi(x,q,a)$ be the number of primes $p \leq x$ with $p \equiv a\, ({\rm mod}\, q)$, Montgomery and Vaughan \cite[Theorem 2]{MV_LS} established the following version of the Brun-Titchmarsh inequality: for any real numbers $x>0$ and $y >q$, one has
\begin{align}\label{20230328_17:26}
\pi(x+y, q,a) - \pi(x,q,a) < \frac{2y}{\phi(q) \log(y/q)}.
\end{align}
Let us momentarily shorten the notation and write 
\begin{equation}\label{20230328_21:29}
G(x) := \frac{1}{\sqrt{x}}\,\widehat{F}\left( \frac{\log x}{2\pi} - \Delta\right).
\end{equation}
Using that $\log p \leq  2\pi\cdot(\Delta +N)$ in our range, inequality \eqref{20230328_17:26} plainly leads us to 
\begin{align*}
\sum_{\substack{p \in I_k  \\ p \equiv a\,  ({\rm mod} \, q)}}  (\log p)  \, G(p)  \leq \left(\sup_{x \in I_k} G_+(x)\right) \big( 2\pi\cdot(\Delta +N)\big)\frac{2 |I_k|}{\phi(q) \log (|I_k|/q)}\,,
\end{align*}
and hence 
\begin{align}\label{20230328_19:35}
\begin{split}
\frac12 \sum_{k=0}^{M}\sum_{\substack{p \in I_k  \\ p \equiv a\,  ({\rm mod} \, q)}}  (\log p)   \, G(p) & \leq \left(\frac{2\pi\cdot(\Delta +N)}{\phi(q)\, \log (|I_k|/q)} \right)\sum_{k=0}^{M}\left(\sup_{x \in I_k} G_+(x)\right)|I_k| \\
& = \frac{\big(2 + o_F(1)\big)}{\phi(q)} \sum_{k=0}^{M}\left(\sup_{x \in I_k} G_+(x)\right)|I_k|
\end{split}
\end{align}
as $q \to \infty$, where we have used \eqref{20230328_18:47}. 

\smallskip

Let $I = \cup_{k=0}^M I_k$. The last sum in \eqref{20230328_19:35} is a Riemann sum, and the idea is to compare with the integral of the function $G_+(x)$ over $I$. Let $x _k \in \overline{I_k}$ be such that $\sup_{x \in I_k} G_+(x) = G_+(x_k)$. The functions $\widehat{F}_+$ and $G_+$ are absolutely continuous and hence we have the classical derivative
\begin{align*}
(G_+)'(t) = -\frac{1}{2 \, t^{3/2}} \,\widehat{F}_+\!\!\left( \frac{\log t}{2\pi} - \Delta\right)  + \frac{1}{2\pi \, t^{3/2}} \, \big(\widehat{F}_+)'\!\left( \frac{\log t}{2\pi} - \Delta\right).
\end{align*}
for almost every $t$. Since $t \geq e^{2 \pi \Delta}$ in our interval $I$, we get
\begin{align}\label{20240405_10:24}
\big|(G_+)'(t) \big| \ll_F \frac{1}{e^{3\pi \Delta}}  \ \ \ {\rm for \ a.e.}   \ t \in I.
\end{align}
Then, if $x \in I_k$, we use the fundamental theorem of calculus and \eqref{20240405_10:24} to get
\begin{align}\label{20230328_20:48}
\big|G_+(x_k) - G_+(x)\big|= \left| \int_x^{x_k} (G_+)'(t)\,\d t\, \right| \ll_F \frac{|I_k|}{e^{3\pi \Delta}}.
\end{align}
Using \eqref{20230328_20:48}, we get
\begin{align*}
&  \left|\sum_{k=0}^{M} \left(\sup_{x \in I_k} G_+(x)\right)|I_k| - \int_I G_+(x)\,\d x\right|    = \left| \sum_{k=0}^{M} \int_{I_k} \big(G_+(x_k) - G_+(x) \big)\,\d x \right| \\
 &  \ \ \ \ \ \ \ \leq  \sum_{k=0}^{M} \int_{I_k} \big|G_+(x_k) - G_+(x) \big|\,\d x  \\
 & \ \ \ \ \ \ \  \ll_F (M+1) \, \frac{|I_k|^2}{e^{3\pi \Delta}} \leq \frac{e^{2\pi N} e^{\pi \Delta}}{\Delta}\,,
\end{align*}
and therefore, as $q \to \infty$, we have
\begin{align}\label{20230328_21:24}
\begin{split}
\sum_{k=0}^{M} \left(\sup_{x \in I_k} G_+(x)\right)|I_k| & \leq \int_I G_+(x)\,\d x + O_F\!\left(\frac{e^{\pi \Delta}}{\Delta}\right) \\
& = 2\pi e^{\pi \Delta} \left(\int_0^{\infty} \widehat{F}_+(y)\, e^{\pi y} \,\d y  + o_F(1)\right),
\end{split}
\end{align}
after an appropriate change of variables and the use of \eqref{20230328_18:47}. 

\smallskip

Combining \eqref{20230328_17:03}, \eqref{20230328_21:29}, \eqref{20230328_19:35}, and \eqref{20230328_21:24} we arrive at 
\begin{align}\label{20230328_21:39}
\sum_{\substack{n\geq2 \\ n \equiv a\,  ({\rm mod} \, q)}}  \frac{\Lambda(n)}{\sqrt{n}}   \, \widehat{h}\!\left( \frac{\log n}{2\pi} \right) \leq \frac{4 \pi \, e^{\pi \Delta}}{\phi(q)}  \left(\int_0^{\infty} \widehat{F}_+(y)\, e^{\pi y} \,\d y  + o_F(1)\right),
\end{align}
as $q \to \infty$.

\subsection{Conclusion} From \eqref{20230327_14:44} we have 
\begin{align*}
\sum_{n\geq2} \frac{\Lambda(n)\chi_0(n)}{\sqrt{n}}   \, \widehat{h}\!\left( \frac{\log n}{2\pi} \right)  - \,\, \phi(q)\!\!\! \!\!\sum_{\substack{n\geq2 \\ n \equiv a\,  ({\rm mod} \, q)}}  \frac{\Lambda(n)}{\sqrt{n}}   \, \widehat{h}\!\left( \frac{\log n}{2\pi} \right)  \leq  \left|\sum_{\chi \neq \chi_0} \overline{\chi(a)}\sum_{n\geq2} \frac{\Lambda(n)\chi(n)}{\sqrt{n}}   \, \widehat{h}\!\left( \frac{\log n}{2\pi} \right)\right|\,,
\end{align*}
and then \eqref{20230327_17:22}, \eqref{20230327_17:24}, and \eqref{20230328_21:39} imply that, as $q \to \infty$,
\begin{align}\label{20230328_21:47}
\pi e^{\pi \Delta}\left(  \int_{-\infty}^{\infty}\widehat{F}(y) \, e^{\pi y} \,\d y - 4 \int_0^{\infty} \widehat{F}_+(y)\, e^{\pi y} \,\d y  + o_F(1)\right)\leq  \frac{\phi(q)\log q}{2}\, \big(\|F\|_1 + o_F(1)\big).
\end{align}
Recalling that $P(a,q) = e^{2\pi \Delta}$, when we send $q \to \infty$ in \eqref{20230328_21:47} we arrive at the inequality 
\begin{align}\label{20230328_22:02}
\limsup_{q \to \infty} \frac{\sqrt{P(a,q)}}{ \phi(q) \log q} \leq \frac{1}{2\pi} \frac{\|F\|_1}{\left(\int_{-\infty}^{0}\widehat{F}(y) \, e^{\pi y} \,\d y - \int_{0}^{\infty}\widehat{F}_-(y) \, e^{\pi y} \,\d y - 3\int_{0}^{\infty}\widehat{F}_+(y) \, e^{\pi y} \,\d y\right)}\,,
\end{align}
where we assume that the denominator on the right-hand side of \eqref{20230328_22:02} is positive. At this point we can take the infimum of the right-hand side of \eqref{20230328_22:02} over $F \in \mc{A}$ with $\widehat{F} \in C^{\infty}_c(\R)$ and, by Theorem \ref{Thm1} (i), such an infimum is indeed $\mc{C}(3)^{-1}$. This concludes the proof of Theorem \ref{Thm4}.

\section{Computer-assisted techniques: proof of Theorem \ref{Thm2}} \label{Sec5_Computational}

With Proposition \ref{Prop6}, the problem of finding upper bounds for $\mc{C}(A)$ is reduced to finding good test functions for the extremal problem (EP2). Intuitively speaking, if $F$ is a near-extremizer for (EP1), the function $\psi(t) = e^{\pi t} \big(-{\bf 1}_{\{F<0\}}(t) + A\, {\bf 1}_{\{F>0\}}(t)\big)$ tends to be a near-extremizer for (EP2), although it may not lie in the class $\mc{B}_A$. Motivated by this intuition, our choices of test functions for (EP2) will be suitable truncations of these, namely 
\begin{align}\label{eq:psi-test}
\psi(t) = e^{\pi t}\sum_{n=0}^{N-1} \left(\left(\tfrac{-1 + (-1)^n}{2}\right) + A \left(\tfrac{1 + (-1)^n}{2}\right)\right) {\bf 1}_{(T_n, T_{n+1})}\, ,
\end{align}
with $0 = T_0 < T_1 < T_2 < \ldots < T_N$. 

\subsection{Upper bound numerics} When taking this family of test functions, (EP2) is now a restricted minimization problem on $\R^N$, over the parameters $(T_1,\ldots,T_N)$. For each given value of $A$, we attempt to solve this problem numerically. Our search routine first takes initial random values of $0 = T_0 < T_1 < T_2 < \ldots < T_N$, and for each random initialization, attempts to find a nearby local minimum with standard numerical optimization methods. This is carried out iteratively, starting with $N=1$ and then using the best example found for a given $N$ to take nearby random initializations for $N+1$, until no significant improvements are found. In this way, for each value of $A$, we find the examples and upper bounds given by Table \ref{tb:uppers}.\smallskip 

\begin{table}[ht]
	\begin{center}
		\begin{tabular}{|c||c|c|c|c|c|c|c|c|}
		\hline
		$A$ & Bound & $T_1$ &$T_2$&$T_3$&$T_4$&$T_5$&$T_6$&$T_7$  \\ \hline \hline
 $\frac{1}{4}$ & 1.33509 & 0.3530083 & 0.3780727 &  0.3925238 & 0.3928645& 0.4072127 & 0.4073054 & - \\ \hline 
 $\frac{1}{3}$ & 1.28781 & 0.3184544 & 0.3597874 & 0.4171521 & 0.4208919 & - & - &- \\ \hline 
  $\frac{1}{2}$ & 1.23080 &0.2490362 & 0.2972170 & 0.3313443 & 0.3330512 & 0.3375152 & - & - \\ \hline 
  $1$ & 1.14731 & 0.1509068& 0.2090402& 0.2318820& 0.2409230& 0.2629189& 0.2789340& \
0.2820042 \\ \hline 
  $3$ & 1.06240 & 0.0561589& 0.1037093& 0.1133532& 0.1234334& 0.1257599& 0.1362797& \
0.1375030 \\ \hline 
		\end{tabular}
		\vspace{0.2cm}
		\caption{Upper bounds for $\mathcal{C}^*(A)$ for each $A$, and the parameters to define the corresponding test functions for (EP2), defined as in \eqref{eq:psi-test}.}
		\label{tb:uppers}
	\end{center}
\end{table}
The computations were carried out in floating point arithmetic, using sufficient precision to justify the decimal digits shown in this section, in the sense that the digits shown in Table \ref{tb:uppers} remain stable after increasing precision. To carefully compute the numerical bound in (EP2) given the respective values of $A$ and $T_i$, we proceed as follows. We explain the computation in more detail in the case $A=1$, and the bounds for the other values of $A$ are verified similarly.

\smallskip

For $A\in\left\{\frac{1}{4},\frac{1}{3},\frac{1}{2},1,3\right\},$ consider the function 
\begin{equation}\label{eq:g_A}
 g_A(t):=\ 2\pi \left(
\,\widehat{{\bf 1}_{\R_-}\, e^{\pi (\cdot)}} - \widehat{{\bf 1}_{\R_+} \, \psi}
\right)(t),
\end{equation}
where $\psi$ is given by \eqref{eq:psi-test},  and $T_i$ are given by the corresponding values in Table \ref{tb:uppers}. Then, in (EP2) we have that $\mathcal{C}^*(A)\le \|g_A\|_\infty$. Let us compute $\|g_A\|_\infty$ in the case $A=1$. See Figure \ref{im:EP2_A1_long} for a plot of the test function $g_1$. One can check that $g_1(t)$ has three local maxima in the interval $[0,1.5]$, namely around $t=0$, $t=0.2902\ldots$, and $t=1.0410\ldots$; see Figure \ref{im:EP2_A1_short}. It is straightforward to numerically compute the maximum values to arbitrary precision for each of the three local maxima, and we find that the maximum of the three occurs at $t=0$, with the value $g_1(0)=1.1473077\ldots$. One can also clearly see that, for instance, $g_1(t)<1.1$ for $t>1.5$. Therefore, $\|g_1\|_\infty=1.1473077\ldots$, giving the upper bound shown in Table \ref{tb:uppers}. Similarly, we see that for $A=\frac{1}{4}$, $g_A$ has two local maxima near $t=0$ and $t=0.2287\ldots$ on the interval $[0,1]$ and is smaller beyond it, with the global maximum being near $t=0$ and giving the desired bound. For $A=\frac{1}{3}$, there are two local maxima near $t=0$ and $t=0.2648\ldots$ on the interval $[0,1]$, with the global maximum near $t=0$ giving the desired bound. For $A=\frac{1}{2}$, there are two local maxima near $t=0.0940\ldots$ and $t=0.5101\ldots$ on the interval $[0,1]$, with the global maximum near $t=0.5101\ldots$ giving the desired bound. Finally, for $A=3$, there are three local maxima near $t=0$, $t=0.3550\ldots$, and $t=2.1464\ldots$ on the interval $[0,4]$, and the maximum of these near $t=0$ gives the stated bound.

\begin{figure}[ht] 
	\centering
	\begin{minipage}{.5\textwidth}
		\centering
		\includegraphics[width=3in]{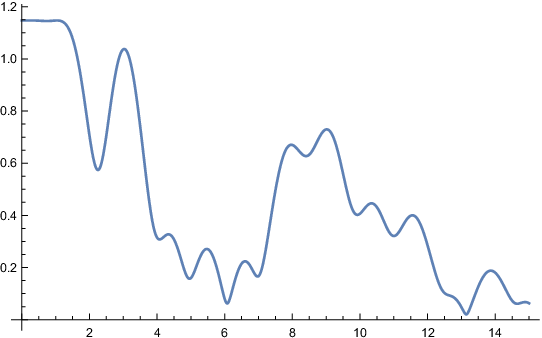} 
		\caption{\label{im:EP2_A1_long}The function  $g_1$ defined in \eqref{eq:g_A} on the interval $[0,15]$.
		}  
	\end{minipage}
	\begin{minipage}{.5\textwidth}
		\centering
		\includegraphics[width=.9\linewidth]{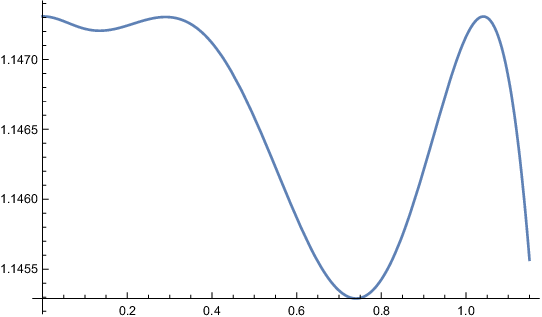} 
		\caption{\label{im:EP2_A1_short}The function  $g_1$ defined in \eqref{eq:g_A} on the interval $[0,1.1]$.}  
	\end{minipage}
\end{figure}

\subsection{Lower bound numerics}
Consider the Fourier transform pairs
\begin{equation}\label{20240411_14:28}
F_n(x)= \frac{1}{\pi(1 + 2ix)^{2n}}; \qquad \widehat{F_n}(t) = \frac{ -(\pi t)^{2n-1} \,e^{\pi t}\,{\bf 1}_{\R_-}(t)}{(2n-1)!},
\end{equation}
and note that $F_n(x)\in \mathcal{A}_\infty$ for all positive integers $n$.
In \S \ref{sec:c-infty}, we showed that the function $F_1(x)$ yields the extremal value $C(\infty)=1$. We now consider the following family of test functions, composed by dilations and translations of linear combinations of functions as in \eqref{20240411_14:28}, defined in terms of their Fourier transform:

\begin{equation}\label{eq:lowers}
 \widehat{F}(t)= g\left(\frac{\pi t - c}{a}
 \right)
 ,\text{ where } \ g(t)=\sum_{n=1}^N \frac{b_n\, t^{2n-1} \, e^{t}\, {\bf 1}_{\R_-}(t)}{(2n-1)!}. 
\end{equation}
Here, $b_n, c\in \R$ and $a>0$. Taken over this family of functions, (EP1) is an unrestricted optimization problem over $\R^{N+2}$ on the variables $b_1,\ldots,b_N, a, c$. The functional to maximize involves a numerical computation of the several integrals that appear in the formulation of (EP1) in \eqref{20230302_10:22}. It is also not smooth, since the integrands involve taking $L^1$-norms of complex-valued functions and positive and negative parts of real-valued functions. To optimize such a functional, we use the principal axis method of Brent \cite{Brent}, which searches for a local maximum of an unrestricted, non-smooth problem.

\smallskip

In Table \ref{tb:lowers}, we give our lower bound for $C(A)$ for each value of $A$, together with the parameters necessary to construct the test function as defined in \eqref{eq:lowers}. We normalize the coefficients $b_i$ so that, approximately, $\|F\|_1=1.0000.$ 
In Figure \ref{im:plots} we plot the functions $\widehat F(t)$, defined as in \eqref{eq:lowers} for the given values of $A$ and parameters in Table \ref{tb:lowers}.

\begin{table}[ht]
	\begin{center}
		\begin{tabular}{|c||c|c|c|c|c|}
		\hline
		$A$ & $\frac{1}{4}$ &$\frac{1}{3}$&$\frac{1}{2}$&$1$&$3$\\ \hline \hline
 Bound & 1.31706 & 1.27722 & 1.22112 &  1.14600 & 1.06082\\ \hline \hline
 $a$ & 0.856 & 0.727 & 0.587 & 0.246 & 0.209\\ \hline
$c$ & 1.082 & 0.922 & 0.758 & 0.626 & 0.201\\ \hline
$b_{1}$ & 0.071653 & 0.058845 & 0.037767 & -0.0027383 & 0.0079689\\ \hline
$b_{2}$ & 5.6943 & 5.0439 & 3.9929 & 0.0 & 2.2768\\ \hline
$b_{3}$ & -5.4722 & -3.2079 & 0.0 & 4.1716 & 0.27094\\ \hline
$b_{4}$ & 5.5724 & 3.0082 & 0.0 & -0.6464 & 4.7335\\ \hline
$b_{5}$ & -3.9261 & -1.4554 & 0.0 & 3.4098 & -4.076\\ \hline
$b_{6}$ & 1.3261 & -0.37314 & 0.0 & 1.0923 & 8.4977\\ \hline
$b_{7}$ & 0.33449 & 1.037 & 0.0 & -1.4628 & -7.0252\\ \hline
$b_{8}$ & -0.40291 & -0.5646 & 0.0 & 2.5377 & 5.9795\\ \hline
$b_{9}$ & 0.034651 & 0.12013 & 0.0 & 0.94904 & -1.1392\\ \hline
$b_{10}$ & 0.0 & -0.032923 & 0.0 & -2.5121 & -0.73835\\ \hline
$b_{11}$ & 0.0 & 0.0 & 0.0 & 2.1423 & 1.0537\\ \hline
$b_{12}$ & 0.0 & 0.0 & 0.0 & -0.3164 & 0.21283\\ \hline
		\end{tabular}
		\vspace{0.2cm}
		\caption{Lower bounds for $\mathcal{C}(A)$ for each $A$, and the parameters to define the corresponding test functions for (EP1), defined as in \eqref{eq:lowers}.}
		\label{tb:lowers}
	\end{center}
\end{table}

\begin{figure}[ht] 
	\centering
		\centering
		\includegraphics[width=5in]{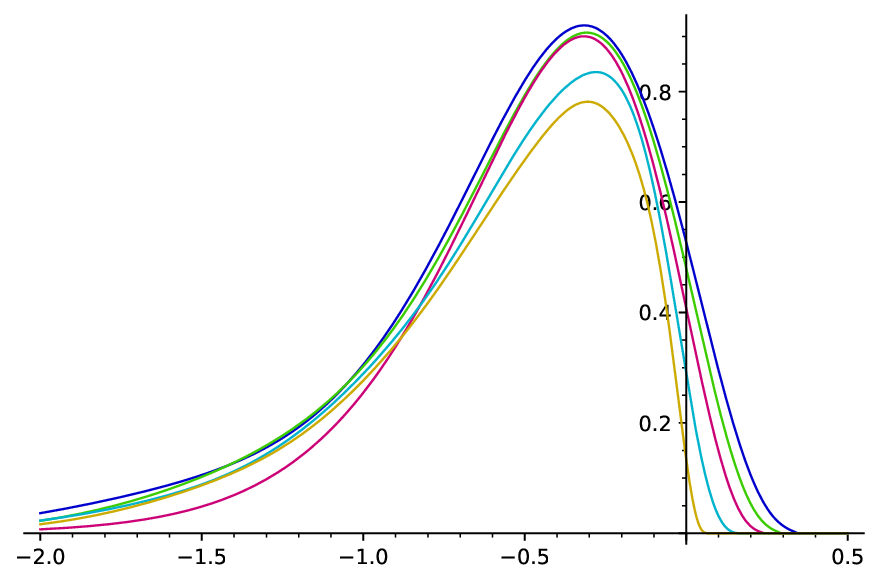}
		\caption{\label{im:plots}The functions  $\widehat F(t)$ defined in \eqref{eq:lowers} for each $A$, with the corresponding parameters from Table \ref{tb:lowers}. We have $A=3$ (yellow), $A=1$ (light blue), $A=1/2$ (magenta), $A=1/3$ (green), and $A=1/4$ (dark blue).
		}  
\end{figure}

To verify the lower bounds, we use the mpmath library \cite{mpmath} in Python 3.12.3, which permits numerical integration with arbitrary-precision floating-point arithmetic. In the arXiv version of this paper \cite{CMQHPR}, we attach Python code to verify the lower bounds with any desired working precision, for the given coefficients and values of $A$ in Table \ref{tb:lowers}. We use at least 30 decimal digits of working precision. The decimal digits for the lower bounds presented in this section are justified in the sense that they remain stable after successively increasing working precision to 100 decimal digits.

\smallskip

\subsection{Remarks on choosing a  family of test functions}
We make some heuristic remarks on the choice of family of test functions. The numerator in (EP1) is
\begin{equation*}
 \int_{-\infty}^{0} \widehat{F}(t)\,e^{\pi t} \,\d t - \int_{0}^{\infty} \widehat{F}_-(t)\,e^{\pi t} \,\d t - A\int_{0}^{\infty} \widehat{F}_+(t)\,e^{\pi t} \,\d t.
\end{equation*}
Thus, for a fixed value of $\|F\|_1$, the mass of $\widehat F$ should be concentrated on $\R_-$, to make the first term above large. The parameter $A$ acts as a penalty, and as $A\to\infty$, this concentration becomes more pronounced. We see this phenomena in Figure \ref{im:plots}, and in fact, for $A=\infty$, we know $F_1$ is optimal. Indeed, all functions $\widehat F_n$ in \eqref{eq:lowers} are continuous and supported in $\R_-$, making them good candidates after proper transformations. Moreover, note that $\|\pi \widehat F_n\|_1=\pi F_n(0)=1$ for all $n$. In practice, this normalization makes the coefficients in Table \ref{tb:lowers} have comparable sizes. Finally, if we include positive even powers of $t$ in \eqref{eq:lowers}, our experiments suggest the optimal functions have zero coefficients in such powers.  \smallskip

Other families were tested. A natural choice is a Hermite basis for $L^2(\R)$, composed of polynomials times Gaussians. Since this generates $L^2(\R)$, one might expect such a generic family to do well for polynomials of sufficiently high degree, and there are no a-priori assumptions on the shape of the resulting optimal functions. Nevertheless, in practice, this family performed significantly worse for the same dimensionality. Actually, this alternative choice of family seemed to slowly converge to the same general shape visible in Figure \ref{im:plots}. One might expect to require polynomials of very high degree to obtain the necessary concentration of mass described above, which might not be computationally viable: see, for instance, the discussion regarding choices of test functions in \cite[Section 7]{CQE}, in connection with the Fourier uncertainty principle.\smallskip

Recent works have used this Hermite family, combined with strong computational techniques-- in particular semidefinite programming-- to find numerical bounds for Fourier optimization problems associated to some number theoretic quantities of interest, using polynomials of high degree (see, for instance, \cite{CGL, CPL, QE}). The fact that we must necessarily work with complex-valued functions $F\in L^1(\R)$ is an important difference to the Fourier analysis frameworks in the aforementioned works (which use even, real-valued functions), and it is not clear if a similar approach with semidefinite programming may be applied in our situation.\smallskip

In Theorem \ref{Thm2}, note that the upper and lower bounds essentially coincide in the first two decimal digits, with our current choices of test functions. Since we have essentially arrived at the limit of the method, we do not attempt further numerical experimentation, leaving the problem of numerically identifying the third decimal digit, and beyond, to the interested reader.

\appendix

\section{The connection between $n_\chi$ and $S(t,\chi)$}

Assuming GRH, Montgomery \cite[Theorem 13.5]{Mont_thesis} has proved that there are real primitive characters $\chi$ modulo $q$ such that $n_\chi = \Omega(\log q \log\log q)$, as $q \to \infty$. Hence, on GRH, we have
\begin{equation}\label{20250805_16:52}
1 \le \limsup_{q \to \infty} \frac{\log n_\chi}{\log \log q} \le 2.
\end{equation}
Given this limitation, it is an interesting question to examine to what extent our bound for $n_\chi$ in Theorem \ref{Thm3} can improved, assuming conjectures beyond GRH. Inspired by the previous work of Montgomery \cite[Chapter 13]{Mont_thesis}, in this appendix we explore the connection between bounds for the least character non-residue $n_\chi$ and conjectural bounds for $S(t,\chi) = \frac{1}{\pi} \arg L(\tfrac{1}{2}+it,\chi)$. 

\smallskip

Montgomery observed that, assuming GRH, Ankeny's argument to prove that $n_\chi = O(\log^2q)$, as $q \to \infty$, relies on Selberg's bound that $S(t,\chi) = O(\frac{\log q\tau}{\log\log q\tau})$, where $\tau=|t|+2$. Furthermore, Montgomery made the connection between $n_\chi$ and $S(t,\chi)$ explicit. Assuming GRH and letting 
\[
M(\chi) = \int_{-\infty}^\infty |S(t,\chi)| \, e^{-|t|} \, \d t, 
\]
Montgomery \cite[Corollary 13.4]{Mont_thesis} proved that
\begin{equation} \label{Mont_bound}
n_\chi \ll \big( M(\chi) \log\log q \big)^2 + \sqrt{\log q}.
\end{equation}
From this result and Selberg's bound $S(t,\chi)= O(\frac{\log q\tau}{\log\log q\tau})$, it immediately follows that $n_\chi = O(\log^2q)$.  Moreover, from Montgomery's result, it is clear that if one can improve the order of magnitude for the upper bound of $|S(t,\chi)|$ in the $q$-aspect, then one can deduce a corresponding order of magnitude improvement for the upper bound for $n_\chi$ as function of $q$. 

\smallskip

Inspired by Montgomery's result in \eqref{Mont_bound}, we prove the following theorem.

\begin{theorem}\label{arg_bound}
Assume GRH for Dirichlet $L$-functions and let $\mc{M}$ be any function such that
\begin{equation} \label{S_assump1}
|S(t,\psi)| \le \big( 1+o(1)\big) \cdot \mc{M}(k) + O(\log \tau)
\end{equation}
for primitive characters $\psi$ modulo $k$, as $k \to \infty$, where $\tau=|t|+2$. Then, if $\chi$ is a non-principal character modulo $q$ of order $\ell$,  we have 
\[
\limsup_{q \to \infty} \frac{n_\chi}{  (\mc{M}(q) \log n_\chi)^2 } \le \left(\frac{2\pi}{\mc{C}\big(\frac{1}{\ell\!-\!1}\big)}\right)^{2},
\]
where $\mc{C}(A)$ is the constant defined in \eqref{20230302_10:22}.
\end{theorem}

Assuming GRH, Montgomery \cite[Corollary 13.6]{Mont_thesis} has shown that there are pairs of  real primitive characters $\chi$ and real numbers $t$ with 
\[
|S(t,\chi)| \gg \sqrt{ \frac{\log q}{\log \log q}},
\]
where $q$ is arbitrarily large and $|t| \ll \log \log q$. This shows that 
\begin{equation} \label{M_bound}
\mc{M}(q)\gg \sqrt{ \frac{\log q}{\log \log q}}.
\end{equation}
Assuming GRH, the sharpest known bound for $|S(t,\chi)|$  is due to Carneiro, Chandee, and Milinovich \cite[Theorem 5 and Example 8]{CCM2} who proved that $\mc{M}(q) = \frac{1}{4} \frac{\log q}{ \log\log q}$ is admissible in \eqref{S_assump1}. Using this estimate, together with \eqref{20250805_16:52}, in the above theorem leads to a bound for $n_\chi$ of the form
\[
\limsup_{q \to \infty} \frac{n_\chi}{ \log^2q } \le \left(\frac{\pi}{\mc{C}\big(\frac{1}{\ell\!-\!1}\big)}\right)^{2},
\]
which is weaker than Theorem \ref{Thm3}  by a factor of $\pi^2$. If one could prove that there is constant $c < \frac{1}{4 \pi}$ such that $\mc{M}(q) = c \, \frac{\log q}{ \log\log q}$ is admissible in \eqref{S_assump1}, then Theorem \ref{arg_bound} could be used to improve upon Theorem \ref{Thm3}. It is widely believed that this is possible and, in fact, that $\mc{M}(q)$ can be chosen to be a considerably slower growing function of $q$ than what is proved in \cite{CCM2}. The work of Farmer, Gonek, and Hughes \cite{FGH} suggests that the following conjecture holds. 

\begin{conjecture}\label{S_assump2}
Let $\chi$ be a primitive character modulo $q$. As  $q \to \infty$, there is a constant $B$ such that
\begin{equation}\label{A}
|S(t,\chi)| \le \big( B+o(1)\big) \sqrt{ \log q \log \log q \,} + O(\log \tau),
\end{equation}
where $\tau=|t|+2$.
\end{conjecture}

Assuming this conjecture, we deduce the following corollary of Theorem \ref{arg_bound}.

\begin{corollary}\label{S_cor}
Let $\chi$ be a non-principal character modulo $q$ of order $\ell$ and assume both GRH for Dirichlet $L$-functions and Conjecture \ref{S_assump2}. Then
\[
\limsup_{q \to \infty} \frac{n_\chi}{  \log q \, (\log \log q)^3 } \le \left(\frac{2\pi B}{\mc{C}\big(\frac{1}{\ell\!-\!1}\big)}\right)^{2}, 
\]
where $B$ is the constant appearing in \eqref{A} and $\mc{C}(A)$ is the constant defined in \eqref{20230302_10:22}.
\end{corollary}
\begin{proof}[Proof of Corollary \ref{S_cor}]
Assuming GRH for $L(s,\chi)$ and Conjecture \ref{S_assump2}, it follows from Montgomery's bound in \eqref{Mont_bound} that $n_\chi \ll \log q \, (\log\log q)^3$, and hence that $\log n_\chi \le \big(1+o(1)\big) \log\log q$, as $q\to\infty$. Combining this observation with the fact that $n_\chi = \Omega(\log q \log\log q)$, as $q \to \infty$, we see that 
\begin{equation} \label{A2}
\limsup_{q \to \infty} \frac{n_\chi}{  (\mc{M}(q) \log n_\chi)^2 } = \limsup_{q \to \infty} \frac{n_\chi}{  (\mc{M}(q) \log \log q)^2 },
\end{equation}
under the assumptions of Corollary \ref{S_cor}. Now observe that Conjecture \ref{S_assump2} implies that the function
\[
\mc{M}(q) = B \sqrt{\log q \log \log q \,}
\] 
is admissible in \eqref{S_assump1}, so the result of
Theorem \ref{arg_bound} combined with \eqref{A2} implies that
\[
\limsup_{q \to \infty} \frac{n_\chi}{ B^2 \, \log q \, (\log \log q)^3 } 
=  \limsup_{q \to \infty} \frac{n_\chi}{  (\mc{M}(q) \log n_\chi)^2 }  \le  \left(\frac{2\pi }{\mc{C}\big(\frac{1}{\ell\!-\!1}\big)}\right)^{2}.
\]
Multiplying this expression by $B^2$, we deduce the corollary. 
\end{proof}

It remains to prove Theorem \ref{arg_bound}. 

\begin{proof}[Proof of Theorem \ref{arg_bound}] We proceed exactly as we did in the proof of Theorem \ref{Thm3}, only we estimate the contribution from the zeros of the relevant $L$-functions differently. As before, we let $\chi$ be a non-principal character modulo $q$ of order $\ell$, and set $2\pi \Delta:= \log n_\chi$. We again let $F \in \mc{A}$ with $\widehat{F} \in C^{\infty}_c(\R)$ be fixed, with $\supp\big( \widehat{F}\big) \subset [-N, N]$, and let
\[
\widehat{h}(t) := \frac{\widehat{F}(t - \Delta) + \widehat{F}(-t - \Delta)}{2}.
\]
Again we assume, without loss of generality, that $\Delta \geq N$. Thus $\widehat{h}(0)=0$, and we can combine estimates \eqref{20230323_10:21} - \eqref{20230327_13:19} to deduce that
\begin{equation} \label{A1}
\begin{split}
&\left|  \sum_{j=1}^{\ell-1} \sum_{\gamma_{(\chi^j)^*}} h\!\left(\gamma_{(\chi^j)^*}\right)   \right| 
\\
& \quad \ge
e^{\pi \Delta}(\ell \!- \!1) \left(   \int_{-\infty}^{0}\widehat{F}(y) \, e^{\pi y} \,\d y -   \int_{0}^{\infty}\widehat{F}_-(y)\,  e^{\pi y} \,\d y - \frac{1}{(\ell\!-\!1)} \int_{0}^{\infty}\widehat{F}_+(y) \, e^{\pi y} \,\d y \right)+ O_F\!\left( \ell\sqrt{\log q}\right),
\end{split}
\end{equation}
where $(\chi^j)^*$ denotes the primitive character inducing $\chi^j$ and $\gamma_{(\chi^j)^*}$ denotes an ordinate of a non-trivial zero of $L(s,(\chi^j)^*)$. 

\smallskip

Now, for our choice of $h$ and any primitive character $\psi$ modulo $k$, using the notation in the proof of Lemma \ref{lemSumOverZeros} and \eqref{20230327_13:02}, we have
\[
\begin{split}
\sum_{\gamma_{\psi}} h\!\left(\gamma_{\psi}\right) &=\int_{0^-}^{\infty} h(t) \, \d N(t, \psi) +  \int_{0^-}^{\infty} h(-t) \, \d N(t, \overline{\psi})
\\
&= \frac{\widehat{h}(0) \log q}{2 \pi} + O\left( \int_0^\infty |h(t)| \log(t\!+\!2) \, \d t \right) + \int_{0^-}^{\infty} h(t) \, \d S(t, \psi) +  \int_{0^-}^{\infty} h(-t) \, \d S(t, \overline{\psi})
\\
&= \int_{0^-}^{\infty} h(t) \, \d S(t, \psi) +  \int_{0^-}^{\infty} h(-t) \, \d S(t, \overline{\psi}) + O_F(1),
\end{split}
\]
upon recalling that $\widehat{h}(0)=0$. Integrating by parts and using \eqref{S_assump1}, we have
\[
\int_{0^-}^{\infty} h(t) \, \d S(t, \psi) +  \int_{0^-}^{\infty} h(-t) \, \d S(t, \overline{\psi}) =  -\int_{0}^{\infty} h'(t) \, S(t, \psi) \, \d t +  \int_{0}^{\infty} h'(-t) \, S(t, \overline{\psi})\,  \d t  + O_F\big( \mc{M}(k) \big).
\]
Since 
\[
h(t) = \tfrac12\big(e^{2\pi i t \Delta} F(t) + e^{-2 \pi i t \Delta}F(-t)\big),
\]
again using \eqref{S_assump1}, we have 
\[
\begin{split}
&\left| \int_{0}^{\infty} h'(t) \, S(t, \psi) \, \d t \right| 
\\
&\qquad \qquad \le 2 \pi \Delta \int_0^\infty \Bigg(\frac{|F(t)| + |F(-t)|}{2}\Bigg) \, |S(t, \psi)| \, \d t + \int_0^\infty \Bigg(\frac{|F'(t)| + |F'(-t)|}{2}\Bigg) \, |S(t, \psi)| \, \d t
\\
&\qquad \qquad \le 2 \pi \Delta \Big(\mc{M}(k) \!+\! o(1) \Big) \frac{ ||F||_1 }{2}   + O_F\big( \mc{M}(k) \big),
\end{split}
\] 
as $k\to \infty$. Similarly, we have
\[
\left|\int_{0}^{\infty} h'(-t) \, S(t, \overline{\psi})\,  \d t  \right| \le 2 \pi \Delta \Big(\mc{M}(k) \!+\! o(1) \Big) \frac{ ||F||_1 }{2} + O_F\big( \mc{M}(k) \big)
\]
and therefore
\begin{equation}\label{zero_bound}
\left|\sum_{\gamma_{\psi}} h\!\left(\gamma_{\psi}\right) \right| \le 2 \pi \Delta \Big(\mc{M}(k) \!+\! o(1) \Big) ||F||_1 + O_F\big( \mc{M}(k) \big).
\end{equation}

\smallskip

To complete the proof, we use the bound \eqref{zero_bound} on each sum over zeros to deduce that
\begin{equation} \label{zeros_ineq}
\begin{split}
\left|  \sum_{j=1}^{\ell-1} \sum_{\gamma_{(\chi^j)^*}} h\!\left(\gamma_{(\chi^j)^*}\right)   \right| &\le \sum_{j=1}^{\ell-1} \left|  \sum_{\gamma_{(\chi^j)^*}} h\!\left(\gamma_{(\chi^j)^*}\right)   \right|
\\
&\le (\ell-1) \, 2 \pi \Delta  \Big(\mc{M}(q) \!+\! o(1) \Big) ||F||_1 + O_F\big( \mc{M}(q) \big)
\end{split}
\end{equation}
Here we have used the fact that the modulus of each $(\chi^j)^*$ is less than or equal to $q$. Combining the inequalities in \eqref{A1} and \eqref{zeros_ineq} and dividing by $\ell-1$, it follows that

\begin{align*}
e^{\pi \Delta} &\left(   \int_{-\infty}^{0}\widehat{F}(y) \, e^{\pi y} \,\d y -   \int_{0}^{\infty}\widehat{F}_-(y)  \,e^{\pi y} \,\d y - \frac{1}{(\ell\!-\!1)} \int_{0}^{\infty}\widehat{F}_+(y)  \,e^{\pi y} \,\d y \right) 
\\
& \qquad \qquad \qquad \qquad  \qquad \qquad  \leq   2 \pi \Delta  \Big(\mc{M}(q) \!+\! o(1) \Big) ||F||_1 + O_F\big( \mc{M}(q) \big) + O_F\!\left(\sqrt{\log q}\right).
\end{align*}
Recalling that $n_{\chi} = e^{2\pi \Delta}$ and using \eqref{M_bound}, this yields
\begin{align}\label{final_form}
\limsup_{q \to \infty}\frac{\sqrt{n_{\chi}}}{\mc{M}(q)  \log n_\chi} \leq  \frac{\|F\|_1}{ \left(   \int_{-\infty}^{0}\widehat{F}(y) \, e^{\pi y} \,\d y -   \int_{0}^{\infty}\widehat{F}_-(y) \, e^{\pi y} \,\d y - \frac{1}{(\ell-1)} \int_{0}^{\infty}\widehat{F}_+(y) \, e^{\pi y} \,\d y \right)}\,,
\end{align}
where we assume that the denominator on the right-hand side of \eqref{final_form} is positive. At this stage we can take the infimum of the right-hand side of \eqref{final_form} over $F \in \mc{A}$ with $\widehat{F} \in C^{\infty}_c(\R)$ and, by Theorem \ref{Thm1} (i), such an infimum is indeed $2\pi \, \mc{C} \! \left(  \frac{1}{(\ell-1)}\right)^{-1}$. This concludes the proof of Theorem \ref{arg_bound}.
\end{proof}

\section*{Acknowledgments}
Part of this research took place while MBM and EQH were visiting the Abdus Salam International Centre for Theoretical Physics (ICTP). They thank ICTP for the hospitality. MBM was supported by the NSF grants DMS-2101912 and DMS-2401461, and the Simons Foundation (award 712898). EQH was supported by the Austrian Science Fund (FWF), projects P-34763 and P-35322, and a PIMS-Simons postdoctoral fellowship at the University of Lethbridge.

\end{document}